\documentclass[3p,preprint]{elsarticle}

\usepackage[utf8]{inputenc}
\usepackage[T1]{fontenc}

\usepackage{amsmath,amssymb,amsthm,mathtools}
\usepackage{bm}

\usepackage{graphicx}
\usepackage{booktabs}
\usepackage{placeins}
\usepackage{xcolor}

\graphicspath{{figures/}}

\biboptions{sort&compress}

\newtheorem{theorem}{Theorem}[section]
\newtheorem{proposition}[theorem]{Proposition}
\newtheorem{lemma}[theorem]{Lemma}

\newtheorem{assumption}[theorem]{Assumption}

\theoremstyle{definition}
\newtheorem{definition}[theorem]{Definition}

\theoremstyle{remark}
\newtheorem{remark}[theorem]{Remark}

\newenvironment{acknowledgements}
  {\section*{Acknowledgements}}
  {}

\newcommand{\R}{\mathbb{R}}
\newcommand{\N}{\mathbb{N}}
\newcommand{\wt}{\widetilde}

\newcommand{\norm}[1]{\left\lVert #1\right\rVert}
\newcommand{\abs}[1]{\left\lvert #1\right\rvert}

\newcommand{\Pd}{\Pi_d(\R^s)}
\newcommand{\Ind}[1]{I_{#1}}
\newcommand{\Kop}{\wt K}

\newcommand{\dist}{\operatorname{dist}}
\newcommand{\diam}{\operatorname{diam}}
\newcommand{\intr}{\operatorname{int}}

\newcommand{\Einf}{E_{\infty}}

\begin{document}

\begin{frontmatter}

\title{Data-dependent Compact Multinode Shepard Method}

\author[address-CS]{Francesco Dell'Accio}
\ead{francesco.dellaccio@unical.it}

\author[address-CS]{Filomena Di Tommaso}
\ead{filomena.ditommaso@unical.it}

\author[address-CS]{Francesco Larosa\corref{corrauthor}}
\ead{francesco.larosa@unical.it}
\cortext[corrauthor]{Corresponding author}

\author[address-cartagena]{Juan Ruiz-\'Alvarez}
\ead{juan.ruiz@upct.es}

\author[address-valencia]{Dionisio F. Y\'a\~nez}
\ead{dionisio.yanez@uv.es}

\address[address-CS]{
Department of Mathematics and Computer Science,
University of Calabria,
via P. Bucci cubo 30 A,
87036 Rende (CS), Italy
}

\address[address-cartagena]{
Departamento de Matem\'atica Aplicada y Estad\'istica,
Universidad Polit\'ecnica de Cartagena,
Spain
}

\address[address-valencia]{
Departamento de Matem\'aticas,
Universidad de Valencia,
Spain
}

\begin{abstract}
We introduce a data-dependent compact multinode Shepard method for the approximation of scattered multivariate data. The proposed approach extends the compact multinode Shepard operator by incorporating nonlinear, data-dependent weights inspired by the weighted essentially non-oscillatory (WENO) methodology while preserving the partition of unity structure and the locality of the approximation. The construction is based on compactly supported multinode weight functions associated with local polynomial interpolants defined on unisolvent stencils. A smoothness indicator is employed to adapt the contribution of each stencil according to the local regularity of the underlying function, thereby reducing spurious oscillations near discontinuities while retaining the optimal approximation order in smooth regions. We establish the theoretical approximation properties of the proposed operator: it achieves the order of accuracy of its linear counterpart for sufficiently smooth functions and, under an explicit indicator-separation hypothesis, the same order is retained at every covered evaluation point near a jump discontinuity. Numerical experiments for local degrees up to four illustrate the performance of the method on both smooth and non-smooth data, on scattered as well as on gridded nodes.
\end{abstract}

\begin{keyword}
Multinode Shepard method \sep
Scattered data approximation \sep
Partition of unity methods \sep
WENO approximation
\end{keyword}

\end{frontmatter}

\section{Introduction}\label{sec:introduction}
Scattered data approximation deals with the problem of reconstructing
an unknown function from values prescribed at irregularly distributed
data sites. The absence of an underlying grid or mesh makes this
setting particularly attractive in applications where data are
collected at irregularly distributed locations and no natural mesh is
available. Among the several techniques developed for this purpose,
Shepard-type methods have retained considerable interest because of
their simple structure, their meshfree character, and the ease with
which they can be applied to arbitrarily scattered data. The classical
Shepard method \cite{Shepard} constructs the approximant as a
normalized weighted average of the data values, using inverse powers
of the distances from the evaluation point. Its partition of unity
structure immediately provides  stability
properties,  and the
singularity of the weights at the nodes makes it interpolatory.
However, the fact that only constants are reproduced limits the
approximation order that can be achieved.

A considerable part of the subsequent development of Shepard-type methods has therefore been devoted to improving the accuracy and the locality of the original construction while preserving its basic simplicity. Compactly supported weights were introduced to restrict the influence of distant data sites and to obtain local
approximations \cite{FrankeNielson}. A different direction, originating
from the triangular Shepard construction, consists in replacing the
individual data values by polynomial interpolants based on small
subsets of the nodes. This idea leads to the multinode Shepard method
\cite{DRNArates,HexagonalShepard,TriangularShepard}, in which local
polynomial interpolants are blended through a partition of unity
generated by products of inverse distances. By increasing the degree
of the local polynomials, the method retains the interpolatory
character of the classical Shepard construction while achieving higher
approximation orders on sufficiently smooth functions. More recently,
the same multinode principle has been combined with compactly supported
weights in the interpolation of scattered data on the torus
\cite{TorusCMS}, providing a local version of the multinode
construction.

The use of higher-degree local interpolants, however, also makes the
quality of the approximation increasingly dependent on the regularity
of the data contained in each stencil. When the underlying function is
only piecewise smooth, a stencil may contain data belonging to
different smooth branches. The corresponding polynomial interpolant
can then be a poor local representation of either branch and may
produce pronounced oscillatory effects in a neighborhood of the
discontinuity. This difficulty is not specific to Shepard
approximation and is closely related to the general problem of
approximating piecewise smooth functions from scattered data; see, for
example, \cite{ArandigaCohen,BozziniRossini,LenarduzziSchaback}. In the
linear multinode Shepard construction all the stencils whose supports
contain the evaluation point (the \emph{active} stencils,
Section~\ref{sec:multinode}) contribute
according to geometrically determined weights, independently of the
regularity of the data from which their local interpolants are built.
It is therefore natural to ask whether the partition of unity
mechanism can be made sensitive to the local behavior of the data, so
that contributions associated with nonsmooth configurations are
reduced without affecting the approximation in smooth regions.

This is the point of view adopted in the present paper. Our
construction is inspired by the weighted essentially non-oscillatory
(WENO) methodology \cite{Shu}, whose basic principle is to assign
nonlinear weights according to suitable measures of local smoothness.
WENO-type modifications of partition of unity approximation have already been considered for B-spline bases \cite{AmatWENO} (see also \cite{AmatBspline}), while a data-dependent version of the classical Shepard method has been developed in \cite{LRRY}. Related nonlinear approaches have also been proposed for radial basis function (RBF) partition of unity approximation \cite{NonlinearPUM} and moving least squares approximation \cite{DataDependentMLS}. Here we combine this nonlinear weighting principle with the compact multinode Shepard construction. Each local polynomial interpolant is associated with a smoothness indicator, and the corresponding compact multinode weight is modified according to
the value of this indicator before the partition of unity is
renormalized. In the construction considered in this paper, the
indicator is obtained from the residuals of the local polynomial at
data sites lying outside the interpolation stencil. Since these values
are already part of the available scattered data, the indicator does
not require additional function evaluations.

The resulting operator remains local and interpolatory and preserves
the partition of unity structure of the underlying compact multinode
method. Its nonlinear character is relevant only in the relative
weight assigned to the active local interpolants. This observation is
particularly useful in smooth regions: once the local interpolants
have the expected approximation accuracy, any convex combination of
them inherits the same order. We show, in fact, that under uniform
locality and stability assumptions the data-dependent operator retains
the order $O(h^{d+1})$ of the multinode construction for sufficiently
smooth functions, where $h$ is the fill distance of the node set and $d$
the degree of the local polynomial interpolants, independently of the
nonlinear exponent. For
piecewise smooth functions separated by a jump discontinuity, the
analysis is necessarily more delicate. We identify explicit
conditions under which the smooth and contaminated contributions are
separated by the indicator and prove that the same approximation order
is recovered at evaluation points satisfying these conditions. At the same time, we show by an explicit configuration that the residual indicator does not detect every stencil crossing a discontinuity. The convergence analysis requires that at least one suitable active stencil lying on one side of the discontinuity has a small smoothness indicator, while all active stencils crossing the discontinuity have indicators uniformly bounded away from zero. We refer to this requirement as indicator separation and impose it explicitly, since it is not guaranteed by the definition of the residual indicator.

The theoretical analysis is complemented by numerical experiments for
local polynomial degrees up to four. The experiments compare the
linear and data-dependent compact multinode Shepard operators on both
smooth and piecewise smooth data and investigate the influence of the
nonlinear exponent. 

The paper is organized as follows. Section~\ref{sec:shepard} recalls
the classical Shepard method and its compactly supported version.
Section~\ref{sec:multinode} reviews the multinode Shepard construction
and introduces the compact multinode operator used throughout the
paper. In Section~\ref{sec:construction} we define the data-dependent
compact multinode Shepard method and the residual-based smoothness
indicator. Section~\ref{sec:properties} is devoted to the theoretical
properties of the new operator, including the behavior of the
indicator, the structural properties of the nonlinear construction,
and the convergence analysis in smooth regions and near a
discontinuity. The numerical experiments are presented in
Section~\ref{sec:numerics}, and concluding remarks are collected in
Section~\ref{sec:conclusions}.

\section{Classical and compact Shepard methods}\label{sec:shepard}

Following the notation of \cite{DellAccioSoftware}, let $\Omega$ be an open, connected and bounded subset of $\R^s$, $s\in\N$, and let $X=\{x_1,\dots,x_n\}\subset\Omega$ be a set of nodes. Given a function $f:\Omega\to\R$ known only through the values $f_i=f(x_i)$ at the nodes, the classical Shepard method \cite{Shepard} is defined by
\begin{equation}\label{eq:shepard}
S_\mu[f](x)=\sum_{i=1}^{n}A_{\mu,i}(x)\,f_i,\qquad x\in\Omega\setminus X,
\end{equation}
where, for a fixed parameter $\mu>0$, the Shepard weights $A_{\mu,i}$, $i=1,\dots,n$, are given by
\begin{equation}\label{eq:Amui}
A_{\mu,i}(x)=
\frac{\norm{x-x_i}^{-\mu}}
{\displaystyle\sum_{k=1}^{n}\norm{x-x_k}^{-\mu}},
\qquad x\in\Omega\setminus X.
\end{equation}
Here $\norm{\cdot}$ denotes the Euclidean norm. At the data sites, the
operator is defined by continuous extension, $S_\mu[f](x_i)=f_i$, so that the method is interpolatory.
The operator $S_\mu[f]$ is a partition of unity method (PUM), see e.g. \cite{Fasshauer}, whose local approximants are the constant values $f_i$, that is, polynomials of degree zero. The classical Shepard weights satisfy the standard partition of unity
and interpolation properties established in
\cite{Barnhill,Shepard}, which we recall below.
\begin{proposition}\label{prop:A}
The functions $A_{\mu,i}$ defined in \eqref{eq:Amui} satisfy
\begin{equation}\label{eq:Aprops}
A_{\mu,i}(x)\ge0,\quad i=1,\dots,n,\qquad
\sum_{i=1}^{n}A_{\mu,i}(x)=1,\quad x\in\Omega\setminus X,
\end{equation}
and
\[
\lim_{x\to x_j}A_{\mu,i}(x)=\delta_{ij},\qquad
D^\alpha A_{\mu,i}(x_j)=0,
\quad 1\le |\alpha|<\mu,\quad i,j\in\{1,\dots,n\},
\]
where $\delta_{ij}$ is the Kronecker delta, $\alpha=(\alpha_1,\dots,\alpha_s)\in\N^s$
is a multi-index with $\abs{\alpha}=\alpha_1+\dots+\alpha_s$,
$D^\alpha=\partial^{\abs{\alpha}}/\partial x_1^{\alpha_1}\cdots\partial x_s^{\alpha_s}$,
and the derivatives are understood for the continuous extensions of
the weights.
\end{proposition}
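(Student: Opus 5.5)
The nonnegativity and partition of unity properties in \eqref{eq:Aprops} follow directly from \eqref{eq:Amui}. For $x\in\Omega\setminus X$ every quantity $\norm{x-x_k}^{-\mu}$ is finite and strictly positive, so each $A_{\mu,i}(x)$ is a quotient of positive numbers. Summing the numerators over $i$ gives exactly the common denominator, so the weights add up to one. For the remaining claims I would rewrite the weights in a form that is regular near a node. Fix $j$ and multiply numerator and denominator by $\norm{x-x_j}^{\mu}$. This gives
\[
A_{\mu,i}(x)=\frac{\norm{x-x_j}^{\mu}\norm{x-x_i}^{-\mu}}{1+\norm{x-x_j}^{\mu}\sum_{k\ne j}\norm{x-x_k}^{-\mu}},\quad i\ne j,
\]
and, for $i=j$,
\[
A_{\mu,j}(x)=\frac{1}{1+\norm{x-x_j}^{\mu}\sum_{k\ne j}\norm{x-x_k}^{-\mu}}.
\]
On a ball $B$ around $x_j$ that contains no other node, the functions $g_k(x)=\norm{x-x_k}^{-\mu}$ with $k\ne j$ are $C^\infty$. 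The denominator has the form $1+r(x)G(x)$, where $r(x)=\norm{x-x_j}^{\mu}$ and $G=\sum_{k\ne j}g_k$ is smooth and bounded. Letting $x\to x_j$ gives $r\to0$, so $A_{\mu,j}\to1$ and $A_{\mu,i}\to0$ for $i\ne j$. This is the Kronecker delta limit. When $i$ and $j$ are both different from the node being approached, the limit is already covered, because the nodes are distinct.

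For the derivative statement, all the weights have the structure $A=\Phi(r(x),x)$ with $\Phi(t,x)$ smooth in both variables near $t=0$. Specifically,
\[
\Phi(t,x)=\frac{t\,g_i(x)}{1+tG(x)}\quad (i\ne j),\qquad \Phi(t,x)=\frac{1}{1+tG(x)}\quad (i=j).
\]
Expanding in $t$ gives $A=\Phi(0,x)+r(x)\,\Psi(x)$, where $\Psi(x)=\int_0^1\partial_t\Phi(\tau r(x),x)\,d\tau$ is smooth and bounded on $B$, and $\Phi(0,x)$ is identically $0$ or identically $1$. So it suffices to show that $D^\alpha\bigl(r\Psi\bigr)(x_j)=0$ for $1\le|\alpha|<\mu$, where the derivatives are taken of the continuous extension. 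The key fact is that $r(x)=\norm{x-x_j}^{\mu}$ satisfies $|D^\beta r(x)|\le C\norm{x-x_j}^{\mu-|\beta|}$ for $x\ne x_j$, since $r$ is homogeneous of degree $\mu$. Applying the Leibniz rule to $D^\beta(r\Psi)$ away from $x_j$, every term is $O(\norm{x-x_j}^{\mu-|\beta|})$. This bound also has to hold for $D^\beta\Psi$, which is a composite of smooth functions of $r$ and $x$; by the chain rule it only involves derivatives of $r$ of order at most $|\beta|$, multiplied by bounded factors.

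The remaining step is an induction on $|\alpha|$. Suppose $h=r\Psi$ is $C^{m}$ at $x_j$ with all derivatives of order at most $m$ vanishing there, and $m+1<\mu$. Each partial derivative of $D^\beta h$ with $|\beta|=m$ at $x_j$ is the limit of a difference quotient. This quotient is bounded by $C\norm{x-x_j}^{\mu-m}/\norm{x-x_j}=O(\norm{x-x_j}^{\mu-m-1})\to0$. So the derivative of order $m+1$ exists and equals zero. Moreover, $D^{\gamma}h(x)=O(\norm{x-x_j}^{\mu-m-1})\to0$ for $|\gamma|=m+1$, which makes that derivative continuous at $x_j$. The base case $m=0$ is the continuity established above. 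The induction stops at the largest order $|\alpha|<\mu$.

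I expect the main obstacle to be this careful justification that the derivatives exist at the node. At $x_j$ the weights are defined only by extension, so one must argue through difference quotients and the homogeneity estimate rather than simply differentiate the formula. The bookkeeping for derivatives of $\Psi$ must also be handled. When $\mu$ is not an integer, $r$ is not smooth, and only orders $|\alpha|<\mu$ are available; this is exactly the restriction in the statement.
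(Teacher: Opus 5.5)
The paper gives no proof of this proposition; it only recalls it from \cite{Barnhill,Shepard}, so there is no in-paper argument to compare with. Your argument is a correct, self-contained proof.

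Normalizing by $\norm{x-x_j}^{\mu}$ near $x_j$ is the local version of the usual singularity-free product form $A_{\mu,i}=\prod_{k\ne i}\norm{x-x_k}^{\mu}/\sum_{l}\prod_{k\ne l}\norm{x-x_k}^{\mu}$. Splitting off the constant $\Phi(0,\cdot)$, using the homogeneity bound for $r=\norm{x-x_j}^{\mu}$, and climbing the order through difference quotients handles non-integer $\mu$ cleanly. It also gives more than is stated:
\begin{itemize}
\item the quantitative estimate $\abs{D^\beta(A_{\mu,i}-\delta_{ij})(x)}\le C\norm{x-x_j}^{\mu-\abs{\beta}}$ near $x_j$, for $0\le\abs{\beta}<\mu$;
\item the fact that the extended weights are of class $C^{\lceil\mu\rceil-1}$. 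This is the regularity of Proposition~\ref{prop:smoothA} except in the even-integer case, where $r$ is a polynomial and the weights are $C^\infty$.
\end{itemize}

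Two sentences should be tightened, although neither is a real gap.

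First, $\Psi$ is not smooth on $B$ unless $\mu$ is an even integer. Indeed $\Psi(x)=\tilde\Psi(r(x),x)$ with $\tilde\Psi(t,x)=\int_0^1\partial_t\Phi(\tau t,x)\,d\tau$ smooth, so $\Psi$ is $C^\infty$ only on $B\setminus\{x_j\}$.

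Second, $D^\beta\Psi$ is in general not $O(\norm{x-x_j}^{\mu-\abs{\beta}})$, if that is what you meant. The chain-rule term $(\partial_x^\beta\tilde\Psi)(r(x),x)$ tends to $D^\beta g_i(x_j)$ when $i\ne j$, which is generically nonzero. What the Leibniz step needs, and what is true, is that $D^\delta\Psi$ is bounded on $B\setminus\{x_j\}$ for $\abs{\delta}<\mu$. This holds because each chain-rule term is a bounded derivative of $\tilde\Psi$ times a product of factors $D^{\delta_l}r=O(\norm{x-x_j}^{\mu-\abs{\delta_l}})$ with $\abs{\delta_l}\le\abs{\delta}<\mu$, and each such factor is bounded. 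Then $D^\gamma r\,D^{\beta-\gamma}\Psi=O(\norm{x-x_j}^{\mu-\abs{\beta}})$ for $\norm{x-x_j}\le1$, which is exactly what your induction uses.

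Finally, take $B$ with closure free of the other nodes, so that $G$, $g_i$ and the derivatives of $\tilde\Psi$ are bounded there.
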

The regularity of these extensions depends on the parameter $\mu$, as
recalled in the following result; see \cite{Barnhill}.
\begin{proposition}\label{prop:smoothA}
The continuous extensions of the functions $A_{\mu,i}$ are:
$C^0$ with cusps at the nodes if $\mu<1$;
$C^0$ with corners at the nodes if $\mu=1$;
$C^{\lfloor\mu\rfloor}$ if $\mu>1$ and $\mu\notin\mathbb N$;
$C^{\mu-1}$ if $\mu>1$ is an odd integer; and
$C^\infty$ rational functions with no real poles if $\mu>1$ is an even
integer.
\end{proposition}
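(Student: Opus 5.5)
The plan is to analyze the continuous extension of $A_{\mu,j}$ near a single node, since away from $X$ every $A_{\mu,i}$ is $C^\infty$. There the denominator $\sum_k\norm{x-x_k}^{-\mu}$ is a smooth positive function and $\norm{x-x_i}^{-\mu}$ is smooth. So only the behaviour in a neighbourhood $U$ of a fixed node $x_j$, containing no other nodes, matters. In $U$ I multiply numerator and denominator by $r^\mu$, where $r=\norm{x-x_j}$. This gives
\[
A_{\mu,i}(x)=\frac{r^\mu g_i(x)}{1+r^\mu G(x)}\quad(i\ne j),\qquad
A_{\mu,j}(x)=\frac{1}{1+r^\mu G(x)} ,
\]
where $g_i(x)=\norm{x-x_i}^{-\mu}$ and $G=\sum_{k\ne j}g_k$ are $C^\infty$ and positive on $U$. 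Thus every weight is $\Phi(x,r^\mu)$ with $\Phi(x,t)$ smooth in $(x,t)$ near $(x_j,0)$. Moreover $\Phi$ is affine-rational in $t$ with $\partial_t\Phi(x_j,0)\neq0$ for the relevant combinations, so no cancellation can occur. The regularity of $A_{\mu,i}$ at $x_j$ is therefore exactly that of $x\mapsto r^\mu=\norm{x-x_j}^\mu$. Expanding $\Phi$ in $t$ gives $A_{\mu,i}=\Phi(x,0)+r^\mu\,\partial_t\Phi(x,0)+O(r^{2\mu})$, and the leading singular term $r^\mu$ carries a smooth nonvanishing coefficient $c(x)$. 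Hence the question reduces to the regularity of $c(x)\,r^\mu$.

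Next I classify the regularity of $r^\mu$ at the origin, case by case.
\begin{itemize}
\item If $\mu<1$, $r^\mu$ is continuous with infinite radial slope, which gives a cusp.
\item If $\mu=1$, $r$ is Lipschitz with a direction-dependent derivative, which gives a corner.
\item If $\mu>1$ is not an integer, the derivatives of order $k$ behave like $r^{\mu-k}$ times bounded homogeneous functions. These are continuous, and vanish at the origin, exactly for $k<\mu$, that is $k\le\lfloor\mu\rfloor$. Order $\lfloor\mu\rfloor+1$ blows up, so the extension is $C^{\lfloor\mu\rfloor}$ and no better.
\item If $\mu$ is an odd integer, $r^\mu$ is $C^{\mu-1}$. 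The $\mu$-th derivatives are homogeneous of degree $0$ and not constant, since $r^\mu$ is not a polynomial. So they are bounded but discontinuous at $x_j$, and the extension is exactly $C^{\mu-1}$.
\item If $\mu$ is even, $r^\mu=(\norm{x-x_j}^2)^{\mu/2}$ is a polynomial. Then $A_{\mu,i}$ itself equals $\prod_{k\ne i}\norm{x-x_k}^{\mu}\big/\sum_l\prod_{k\ne l}\norm{x-x_k}^{\mu}$, after multiplying through by $\prod_k\norm{x-x_k}^\mu$. This is a rational function whose denominator is a sum of nonnegative polynomials. The denominator cannot vanish at a real point, since at most one factor $\norm{x-x_k}$ vanishes there, so one summand stays positive. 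This gives $C^\infty$ with no real poles.
\end{itemize}

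The main obstacle is the sharpness claim: showing that smoothness is not better than stated. Smoothing could fail to be lost only through cancellation. I handle this with the factorization $A=\Phi(x,0)+c(x)r^\mu+O(r^{2\mu})$, where $c(x_j)\ne0$. For $A_{\mu,j}$ one has $c(x_j)=-G(x_j)$, and for $i\ne j$ one has $c(x_j)=g_i(x_j)>0$.

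The argument then concludes in two steps. First, $r^{2\mu}$ is strictly smoother than $r^\mu$ in every non-even case. Second, multiplying by a smooth nonvanishing $c$ preserves the singular behaviour of the leading order derivative, since by the Leibniz rule the top-order term is $c\,D^\alpha r^\mu$ plus smoother terms. Together these make the regularity exactly that of $r^\mu$. The remaining verifications are routine homogeneity computations.
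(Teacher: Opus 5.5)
The paper gives no proof of this proposition. It is recalled from \cite{Barnhill}, so your argument is a self-contained proof rather than a variant of one in the text, and it is correct. Your key device writes every weight near a node as $\Phi(x,r^\mu)$, with $r=\norm{x-x_j}$ and $\Phi$ smooth and linear-fractional in $t$. This reduces the whole classification to the model function $\norm{x-x_j}^\mu$. The chain rule then gives the positive regularity, and the nondegeneracy $\partial_t\Phi(x_j,0)\neq0$ gives the sharpness of each case: the cusps and corners, and the failure of the next order of differentiability. None of this is argued in the paper. The same reduction also applies verbatim to the multinode weights of Proposition~\ref{prop:W}, whose regularity the paper likewise only asserts. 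Each stencil contains a given node at most once, so after multiplying by $\norm{x-x_i}^\mu$ every $W_{\mu,j}$ is again linear-fractional in $t=\norm{x-x_i}^\mu$ with smooth coefficients.

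Two points need tightening.

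First, for $\mu>1$ the remainder is $r^{2\mu}\Psi(x,r^\mu)$ with $\Psi$ smooth, and writing it as $O(r^{2\mu})$ is only a size bound. What your sharpness step actually uses is the derivative bound $D^\alpha[r^{2\mu}\Psi(x,r^\mu)]=O(r^{2\mu-\abs{\alpha}})$. This follows from Leibniz and Fa\`a di Bruno with $D^\gamma r^\mu=O(r^{\mu-\abs{\gamma}})$, together with $2\mu>\lfloor\mu\rfloor+1$.

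You can avoid the expansion altogether by inverting $\Phi$ in $t$: $r^\mu=A_{\mu,i}/(g_i-A_{\mu,i}G)$ for $i\neq j$, and $r^\mu=(1-A_{\mu,j})/(A_{\mu,j}G)$. So $r^\mu$ is a smooth function of $(x,A_{\mu,i}(x))$ near $x_j$. Any extra regularity of $A_{\mu,i}$, or differentiability at $x_j$ when $\mu\le1$, would then pass to $r^\mu$, which is impossible. The expansion is then needed only to tell cusps from corners, and there the size bound suffices.

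Second, $c(x_j)=-G(x_j)\neq0$ requires $n\ge2$. For $n=1$ one has $A_{\mu,1}\equiv1$, with no cusps or corners, so this tacit hypothesis of the statement should be made explicit.
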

A compactly supported version of the Shepard method was introduced by
Franke and Nielson \cite{FrankeNielson} by replacing the global weights $A_{\mu,i}$ with
\begin{equation}\label{eq:FrankeNielson}
\wt A_{\mu,i}(x)=
\frac{\left(\dfrac{1}{\norm{x-x_i}}-\dfrac{1}{R_i}\right)_{+}^{\mu}}
{\displaystyle\sum_{k=1}^{n}
\left(\dfrac{1}{\norm{x-x_k}}-\dfrac{1}{R_k}\right)_{+}^{\mu}},
\qquad i=1,\dots,n.
\end{equation}
Here $(z)_+=\max\{z,0\}$, and the weights are defined for
\[
x\in\bigcup_{k=1}^{n}B(x_k,R_k)\setminus X,
\]
where $B(x_k,R_k)$ denotes the open Euclidean ball centered at $x_k$ with radius $R_k>0$. The support radius $R_k$ is typically chosen so that $B(x_k,R_k)$ contains a prescribed number of data sites \cite{FrankeNielson}. As in the classical case, the definition is completed at the data sites by continuous extension.

\section{Multinode Shepard method and compact multinode Shepard method}\label{sec:multinode}
Dell'Accio et al.\ in \cite{TriangularShepard,HexagonalShepard,DRNArates} extend the idea of the Shepard method using
polynomials of higher degree. Let $d\in\N$, let $\Pd$ denote the space
of polynomials of total degree at most $d$ in $s$ variables, of dimension
$m=\binom{d+s}{s}$, and recall that a set of $m$ nodes is
\emph{$\Pd$-unisolvent} if every function defined on it admits a unique
interpolant in $\Pd$. Let
\[
\mathcal S_d=\{\tau_1,\dots,\tau_r\},\qquad
\tau_j=\{x_{j_1},\dots,x_{j_m}\}\subset X,
\]
be a family of $r$ \emph{stencils}, i.e.\ pairwise distinct unordered
$m$-subsets of $X$,
each consisting of pairwise distinct nodes, such that each
$\tau_j$ is $\Pd$-unisolvent and $\mathcal S_d$ is a covering of $X$,
i.e.\ $\bigcup\limits_{j=1}^r\tau_j=X$.
The multinode Shepard method is
\begin{equation}\label{eq:multinode}
M_\mu[f](x)=\sum_{j=1}^{r}W_{\mu,j}(x)\,P_j[f](x),\qquad
x\in\Omega\setminus X,
\end{equation}
with $P_j[f]\in\Pd$ the unique interpolation polynomial on $\tau_j$
and
\begin{equation}\label{eq:Wmuj}
W_{\mu,j}(x)=
\frac{\displaystyle\prod_{i=1}^{m}\norm{x-x_{j_i}}^{-\mu}}
{\displaystyle\sum_{k=1}^{r}\prod_{i=1}^{m}\norm{x-x_{k_i}}^{-\mu}},
\qquad\mu>0,
\end{equation}
completed by continuous extension at the data sites. The basic properties of the multinode weights established in
\cite{DRNArates} are recalled in the following proposition.

\begin{proposition}\label{prop:W}
The functions $W_{\mu,j}$ satisfy, for $x\in\Omega\setminus X$:
$W_{\mu,j}(x)\ge0$; $\sum_{j=1}^{r}W_{\mu,j}(x)=1$;
$\lim_{x\to x_i}W_{\mu,j}(x)=0$ for all $j$ with $x_i\notin\tau_j$;
and $\sum_{k\in J_i}\lim_{x\to x_i}W_{\mu,k}(x)=1$, where
$J_i=\{j:1\le j\le r,\ x_i\in\tau_j\}$. Moreover, the extended
$W_{\mu,j}$ satisfy the differentiability properties of
Proposition~\ref{prop:smoothA}.
\end{proposition}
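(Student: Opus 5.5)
Write $\pi_j(x)=\prod_{i=1}^{m}\norm{x-x_{j_i}}^{-\mu}$, so that $W_{\mu,j}=\pi_j/\sum_{k=1}^r\pi_k$ on $\Omega\setminus X$. The plan is to reduce every claim to elementary facts about the products $\pi_j$, and to get the regularity statement by clearing denominators, mirroring what is done for the classical weights $A_{\mu,i}$.

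Nonnegativity and the partition of unity are immediate. Each $\pi_j(x)$ is a finite product of positive numbers when $x\notin X$, so the denominator is positive and the quotients sum to one by construction.

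For the limits at a node $x_i$, split the stencils into $J_i$ and its complement. If $k\in J_i$, then $\pi_k(x)=\norm{x-x_i}^{-\mu}\,\rho_k(x)$, where $\rho_k(x)=\prod_{x_{k_l}\neq x_i}\norm{x-x_{k_l}}^{-\mu}$ is continuous and positive near $x_i$. This uses that the nodes of a stencil are pairwise distinct, so $x_i$ appears exactly once. If $j\notin J_i$, then $\pi_j$ is continuous and bounded near $x_i$. Multiply numerator and denominator by $\norm{x-x_i}^{\mu}$:
\[
W_{\mu,j}(x)=\frac{\norm{x-x_i}^{\mu}\pi_j(x)}{\sum_{k\in J_i}\rho_k(x)+\norm{x-x_i}^{\mu}\sum_{k\notin J_i}\pi_k(x)}.
\]
Since $X$ is covered, $J_i\neq\emptyset$, so the denominator tends to $\sum_{k\in J_i}\rho_k(x_i)>0$. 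For $j\notin J_i$ the numerator tends to $0$, which gives the third claim. For $k\in J_i$ the numerator tends to $\rho_k(x_i)$, and summing over $J_i$ gives $1$.

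For the differentiability, multiply numerator and denominator of $W_{\mu,j}$ by $\prod_{l=1}^n\norm{x-x_l}^{\mu}$. Then
\[
W_{\mu,j}(x)=\frac{\prod_{x_l\notin\tau_j}\norm{x-x_l}^{\mu}}{\sum_{k=1}^r\prod_{x_l\notin\tau_k}\norm{x-x_l}^{\mu}}.
\]
This formula is valid on all of $\Omega$, because the denominator never vanishes: at $x=x_i$, every term with $k\in J_i$ is strictly positive. So the regularity of the extended $W_{\mu,j}$ is exactly the regularity of finite sums of products of the functions $\norm{x-x_l}^{\mu}$, divided by a positive function of the same type. This is the same situation as for $A_{\mu,i}$ in Proposition~\ref{prop:smoothA}, and it remains to read off the regularity of $\norm{x-x_l}^{\mu}$ near $x_l$:
\begin{itemize}
\item for $\mu$ an even integer it is a polynomial, so $W_{\mu,j}$ is rational and $C^\infty$ with no real poles;
\item for $\mu$ an odd integer it is $C^{\mu-1}$;
\item for $\mu>1$ non-integer it is $C^{\lfloor\mu\rfloor}$;
\item for $\mu\le1$ it is only $C^0$, with a corner at $\mu=1$ and a cusp for $\mu<1$.
\end{itemize}

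The main obstacle is confirming that the cusps and corners genuinely survive, rather than being smoothed out, when several stencils share $x_i$. Near $x_i$ the non-smooth factor $\norm{x-x_i}^{\mu}$ appears in numerator and denominator only through the terms with $k\notin J_i$. Because the leading term is positive and smooth, the singular part enters at the order $\norm{x-x_i}^{\mu}$ and does not cancel. So the exact regularity class is inherited from that of $\norm{x-x_i}^{\mu}$, as in the classical proof.
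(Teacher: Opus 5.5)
Your argument is correct and follows the standard route. The paper does not prove this proposition but recalls it from \cite{DRNArates}; clearing the factor $\prod_{l=1}^{n}\norm{x-x_l}^{\mu}$ is the usual device for multinode weights, and your limit computation is the same $\norm{x-x_i}^{\mu}$-rescaling, split over $J_i$ and its complement, that the paper itself uses for the nonlinear weights in the proof of Proposition~\ref{prop:interp}(b).

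Your last paragraph needs one qualification. For $j\in J_i$, the coefficient of $\norm{x-x_i}^{\mu}$ in the local expansion of $W_{\mu,j}$ is a nonzero multiple of $\sum_{k\notin J_i}\prod_{x_l\notin\tau_k,\,x_l\ne x_i}\norm{x_i-x_l}^{\mu}$. So the cusp or corner at $x_i$ survives only if some stencil omits $x_i$ (for $r=1$, $W_{\mu,1}\equiv1$). This is a degenerate case in which the sharp reading of the statement itself fails, and it does not affect the regularity class you prove.
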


Following the idea of Franke and Nielson \cite{FrankeNielson}, a
compact multinode Shepard (CMS) operator was developed in
\cite{TorusCMS} for scattered data interpolation on the torus, viewed
as a smooth embedded manifold in $\R^3$. Here, we adopt the same
localization principle and replace the global weights with

\begin{equation}\label{eq:CMSweights}
\wt W_{\mu,j}(x)=
\frac{\displaystyle\prod_{i=1}^{m}
\left(\dfrac{1}{\norm{x-x_{j_i}}}-\dfrac{1}{R_j}\right)_{+}^{\mu}}
{\displaystyle\sum_{k=1}^{r}\prod_{i=1}^{m}
\left(\dfrac{1}{\norm{x-x_{k_i}}}-\dfrac{1}{R_k}\right)_{+}^{\mu}},
\qquad j=1,\dots,r,
\end{equation}
where truncation and exponent act factorwise in both numerator and
denominator, and $R_j>0$ is a support radius associated with the stencil
$\tau_j$, specified below. For $x\in\Omega\setminus X$, the unnormalized numerator
associated with the $j$-th stencil is positive if and only if
$\norm{x-x_{j_i}}<R_j$ for every $i=1,\dots,m$. In order to
determine the radii $R_j$ we use the fill distance
\begin{equation}\label{eq:filldistance}
h_{X,\Omega}=\sup_{x\in\Omega}\min_{x_i\in X}\norm{x-x_i},
\end{equation}
or rather a fixed \emph{strict upper bound} of it: throughout the
paper, $h$ denotes a number satisfying $h>h_{X,\Omega}$. In practice,
$h$ is chosen as a certified computable upper bound; see
Section~\ref{sec:numerics}, since the exact fill distance, defined
through a supremum over $\Omega$, is in general not directly computable. We then set
\[
R_j=\diam(\tau_j)+h,
\]
where
\[
\diam(\tau_j)
=\max_{x,y\in\tau_j}\norm{x-y}
\]
denotes the diameter of the stencil $\tau_j$, so that the support of $\wt W_{\mu,j}$ is
\[
C_j=\bigcap_{i=1}^{m}
\{y\in\R^s:\norm{y-x_{j_i}}\le R_j\},
\]
which represents the intersection of the $m$ closed Euclidean balls of radius $R_j$ centered at the nodes of the stencil $\tau_j$. This set is independent of $\mu$ and of $\Omega$, and $\intr(C_j)$ denotes its interior in $\R^s$, so that points of $\partial\Omega$ are not excluded from the supports. For $x\in\Omega$, let
\[
J_x=\{j:1\le j\le r,\ x\in\intr(C_j)\}.
\]
We say that $x$ is \emph{covered} if $J_x\neq\emptyset$, and we call
the stencils $\tau_j$ with $j\in J_x$ \emph{active} at $x$. For a
covered $x\in\Omega\setminus X$, the CMS operator is defined by
\begin{equation}\label{eq:CMS}
\wt M_\mu[f](x)
=\sum_{j\in J_x}\wt W_{\mu,j}(x)\,P_j[f](x).
\end{equation}
At the data sites, the definition is completed by continuous
extension. The covering property $\bigcup\limits_{j=1}^r\tau_j=X$,
together with the strict inequality $h>h_{X,\Omega}$, ensures that
the interiors of the supports cover $\Omega$, as established in the
following lemma.

\begin{lemma}\label{lem:coverage}
Assume that $\bigcup\limits_{j=1}^r\tau_j=X$ and $h>h_{X,\Omega}$.
Then
\[
\Omega\subseteq\bigcup_{j=1}^r\intr(C_j),
\]
and hence $J_x\neq\emptyset$ for every $x\in\Omega$. More precisely,
if $x_i$ is a nearest data site to $x$, then
\[
x\in\intr(C_j)
\]
for every stencil $\tau_j$ containing $x_i$.
\end{lemma}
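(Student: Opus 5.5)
The plan is to prove the ``more precisely'' statement first; the inclusion $\Omega\subseteq\bigcup_j\intr(C_j)$ then follows at once. Fix $x\in\Omega$. Since $X$ is finite, the minimum $\min_{x_k\in X}\norm{x-x_k}$ is attained, so a nearest data site $x_i$ exists. By the covering hypothesis $\bigcup_{j}\tau_j=X$, at least one stencil $\tau_j$ contains $x_i$. Hence, once we show that $x\in\intr(C_j)$ for every stencil containing $x_i$, we obtain $J_x\neq\emptyset$.

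Now fix a stencil $\tau_j\ni x_i$. By the definition of the fill distance,
\[
\norm{x-x_i}=\min_{x_k\in X}\norm{x-x_k}\le h_{X,\Omega}<h,
\]
where the strict inequality is exactly the standing assumption on $h$. Let $x_{j_l}\in\tau_j$ be any node of the stencil. By the triangle inequality and the definition of $\diam(\tau_j)$,
\[
\norm{x-x_{j_l}}\le\norm{x-x_i}+\norm{x_i-x_{j_l}}\le\norm{x-x_i}+\diam(\tau_j)<h+\diam(\tau_j)=R_j .
\]
Set $\varepsilon=h-\norm{x-x_i}>0$. This quantity does not depend on $l$, and the display gives $\norm{x-x_{j_l}}\le R_j-\varepsilon$ for every $l=1,\dots,m$.

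The only point that needs some care is passing from these strict inequalities to membership in the interior of $C_j$. The key is the uniform margin $\varepsilon$. For any $y$ with $\norm{y-x}<\varepsilon$, the triangle inequality gives $\norm{y-x_{j_l}}<R_j$ for all $l$. Therefore the open ball $B(x,\varepsilon)$ lies in each closed ball $\{y:\norm{y-x_{j_l}}\le R_j\}$, and hence in their intersection $C_j$. This shows $x\in\intr(C_j)$, that is, $j\in J_x$.

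Since $x\in\Omega$ was arbitrary, we conclude that $\Omega\subseteq\bigcup_j\intr(C_j)$. Equivalently, every point of $\Omega$ is covered, and in particular every stencil containing a nearest node of $x$ is active at $x$.
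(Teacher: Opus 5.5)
Your proof is correct and follows the paper's argument: take a nearest site $x_i$, a stencil $\tau_j\ni x_i$, and use the triangle inequality with $\norm{x-x_i}\le h_{X,\Omega}<h$ to get $\norm{x-x_{j_l}}<R_j$ for every node. The only addition is your explicit margin $\varepsilon=h-\norm{x-x_i}$ to justify $x\in\intr(C_j)$, which the paper takes as immediate because a finite intersection of open balls is open and contained in $C_j$.
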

\begin{proof}
Fix $x\in\Omega$ and let $x_i\in X$ be a nearest data site to $x$.
Since the stencils cover $X$, let $\tau_j$ be any stencil containing
$x_i$. For every $x_{j_q}\in\tau_j$, the triangle inequality gives
\[
\norm{x-x_{j_q}}
\le \norm{x-x_i}+\norm{x_i-x_{j_q}}
\le h_{X,\Omega}+\diam(\tau_j)
< h+\diam(\tau_j)
=R_j.
\]
Thus $x\in\intr(C_j)$. Since $x\in\Omega$ was arbitrary, the result
follows.
\end{proof}
\begin{remark}
The strict inequality $h>h_{X,\Omega}$ is essential: with $h=h_{X,\Omega}$ the points attaining
the supremum in \eqref{eq:filldistance} may fall on support
boundaries (for instance, for degree zero, $R_j=h$, and a fill-distance
maximizer is at distance at least $h$ from every node, with equality
for at least one nearest node), and the
operator would be undefined there. Any computable upper bound strictly larger than $h_{X,\Omega}$
satisfies the hypothesis. If the exact fill distance is known, it is
enough to increase it by an arbitrarily small positive amount.
\end{remark}

\section{Data-dependent compact multinode Shepard method: construction}\label{sec:construction}

In this section we introduce a nonlinear extension of the compact
multinode Shepard method by incorporating data-dependent weights
inspired by the WENO methodology \cite{Shu}. In \cite{AmatWENO}, the partition of unity structure of B-spline basis functions is exploited to introduce a nonlinear WENO-type
weighting strategy; the same idea is applied to the classical Shepard
method in \cite{LRRY}. Following these ideas, we define, for $x\in\Omega\setminus X$, the
nonlinear operator
\begin{equation}\label{eq:Kdef}
\Kop[f](x)
=\sum_{j\in J_x}\wt B_{\mu,j}[f](x)\,P_j[f](x),
\end{equation}
where
\begin{equation}\label{eq:Bdef}
\wt B_{\mu,j}[f](x)
=\frac{\beta_{\mu,j}[f](x)}
{\displaystyle\sum_{k\in J_x}\beta_{\mu,k}[f](x)},
\qquad
\beta_{\mu,j}[f](x)
=\frac{\wt W_{\mu,j}(x)}{(\epsilon+\Ind{j})^{t}}.
\end{equation}
Here $\Ind{j}\ge0$ is a smoothness indicator associated with the
stencil $\tau_j$, while $t\ge1$ and $\epsilon>0$ are parameters.
The exponent $t$ controls the damping of non-smooth contributions,
whereas $\epsilon$ ensures that the denominator in the definition of
$\beta_{\mu,j}[f]$ remains strictly positive. For the asymptotic analysis, we take
$\epsilon=\epsilon(h)\le h^{d+1}$ and complete the definition of the
operator at the data sites by setting
\[
\Kop[f](x_i)=f_i,\qquad i=1,\dots,n.
\]
Since $P_j[f]$ \emph{interpolates} $f$ on $\tau_j$, the residuals
at the stencil nodes vanish identically and therefore provide no
information on the local regularity. In contrast to the degree-zero
setting of \cite{LRRY}, where the indicator is constructed from an
auxiliary least-squares fit, here the residuals must be evaluated at
nodes outside $\tau_j$. Since these nodes belong to $X$, no
additional function evaluations are required, as the same local
interpolant $P_j[f]$ can be evaluated at them.
\begin{definition}\label{def:indicator}
For $j=1,\dots,r$, let
$\mathcal E_j\subseteq X\setminus\tau_j$ be a nonempty
\emph{indicator set} with $\abs{\mathcal E_j}\le N_E$ and $\dist(y,\tau_j)\le c_Eh$
for every $y\in \mathcal E_j$, where $N_E\in\N$ and $c_E>0$ are constants
independent of $j$ and $h$, $\abs{F}$ denotes the cardinality of a
finite set $F$, and $\dist(y,\tau_j)=\min_{i}\norm{y-x_{j_i}}$.
The smoothness indicator associated with $\tau_j$ is
\begin{equation}\label{eq:indicator}
\Ind{j}=\frac{1}{\abs{\mathcal E_j}}\sum_{x_i\in \mathcal E_j}
\abs{\,f(x_i)-P_j[f](x_i)\,}.
\end{equation}
\end{definition}
In the implementation, where $s=2$, we take $\mathcal E_j=\mathcal N_i\setminus\tau_j$,
where $\mathcal N_i$ is the neighborhood of the seed $x_i$ of $\tau_j$,
from which the stencil is extracted during the stencil selection
(Section~\ref{sec:properties}); this set is nonempty,
since $\abs{\mathcal N_i}\ge\tfrac12(d+2)(d+3)>m$, and it reuses the neighborhood already
constructed
by the search procedure. For the quasi-uniform node sets of
Section~\ref{sec:properties}, this choice satisfies the requirements of
Definition~\ref{def:indicator} with $N_E$ and $c_E$ independent of $h$:
$\mathcal N_i$ is contained in a square of side proportional to $h$ centered at a
node of $\tau_j$, and such a square contains a number of nodes bounded
independently of $h$. This is the choice used in all the numerical
experiments of Section~\ref{sec:numerics}.
The behavior of the indicator is analyzed in
Lemma~\ref{lem:P1P2}. In smooth regions, \eqref{eq:indicator} is
$O(h^{d+1})$; moreover, when the stencil lies on one side of the
discontinuity and its indicator set contains nodes on the other side,
the indicator is uniformly bounded away from zero. Remark~\ref{rem:alias} shows, through
an explicit configuration, that such a lower bound does not extend to
arbitrary stencils crossing the discontinuity.

\section{Properties of the new operator}\label{sec:properties}
Throughout this section $X$ is quasi-uniform and the upper bound $h$
of Section~\ref{sec:multinode} is taken within the quasi-uniformity
window: there exists $c_{qu}>0$, independent of $n$, such that
\[
q_X\;\le\;h_{X,\Omega}\;<\;h\;\le\;c_{qu}\,q_X,
\]
where
\[
q_X:=\tfrac12\min_{i\ne k}\norm{x_i-x_k}
\]
is the separation distance. 
The first inequality uses the connectedness of $\Omega$. Let
$x_a\in X$; since $\Omega$ contains points arbitrarily close to $x_a$
and, for $n\ge2$, points arbitrarily close to a node $x_b$ with
$\norm{x_b-x_a}\ge2q_X$, it meets both $B(x_a,q_X)$ and the complement
of $\overline{B(x_a,q_X)}$; being open and connected, it is not the
union of these two disjoint open sets, hence it contains a point $p$
with $\norm{p-x_a}=q_X$. For every other node $x_c$,
$\norm{p-x_c}\ge\norm{x_c-x_a}-\norm{p-x_a}\ge q_X$, so that
$\dist(p,X)=q_X$ and $h_{X,\Omega}\ge q_X$. The argument also holds for
$X\subset\overline\Omega$. All the estimates below are stated in
terms of $h$; since $h$ and $h_{X,\Omega}$ are comparable, the
corresponding orders are the same, and $h$ is also the quantity used
in the numerical experiments.
For each stencil $\tau_j$, we set
\[
a_j:=\tfrac1m\sum_{i=1}^m x_{j_i},
\]
the barycenter of $\tau_j$.

\begin{assumption}\label{ass:stencils}
There exist constants $c_1,\sigma_0>0$, independent of $h$, such that
for every $j=1,\dots,r$:
\begin{itemize}
\item[\rm(i)] $\diam(\tau_j)\le c_1h$;
\item[\rm(ii)] denoting by $V_j$ the Vandermonde matrix of a fixed
basis of $\Pd$ at the scaled nodes $(x_{j_i}-a_j)/h$, the smallest singular value of $V_j$ satisfies
$\sigma_{\min}(V_j)\ge\sigma_0$.
\end{itemize}
\end{assumption}
Setting $c_2:=c_1+\max\{1,c_E\}$, with $c_E$ the constant of
Definition~\ref{def:indicator}, Assumption~\ref{ass:stencils}(i) and the
definition $R_j=\diam(\tau_j)+h$ give
\[
R_j\le c_2h.
\]
Moreover, every $y\in\overline{C_j}$ (in particular every node of
$\tau_j$)
satisfies
\begin{equation}\label{eq:barycentric}
\norm{y-a_j}
\le\frac1m\sum_{i=1}^m\norm{y-x_{j_i}}
\le R_j
\le c_2h.
\end{equation}
Every point $y$ of an indicator set $\mathcal E_j$ satisfies
$\norm{y-a_j}\le\dist(y,\tau_j)+\diam(\tau_j)\le c_2h$ as well.
Thus all the local objects involved below lie in the closed ball
$\overline{B(a_j,c_2h)}$.

Condition (ii) is a scale-invariant stability requirement. For fixed
$s$, $d$, $c_1$ and $c_E$, the lower bound
$\sigma_{\min}(V_j)\ge\sigma_0$, together with the locality condition
in (i), implies a uniform bound
$\Lambda=\Lambda(s,d,c_1,c_E,\sigma_0)$ on the local Lebesgue constants of
the interpolation operators $f\mapsto P_j[f]$ on the balls
$B(a_j,c_2h)$, by a standard finite-dimensional argument. Condition
(i), in turn, reflects the locality of the neighborhood-based stencil
construction used in the implementation.

In the implementation, where $s=2$, the family $\mathcal S_d$ is built
by a minimal-covering construction, as in \cite{DellAccioSoftware}, in
which each stencil is extracted from a local neighborhood by a discrete
Leja-type selection based on LU factorization with row pivoting
\cite{BosLeja}. The nodes of $X$ are visited in a fixed order, and a new
stencil is built only when a node not yet contained in any stencil is
met, which keeps the number of stencils small (hence the name).
For such a node $x_i$, the \emph{seed}, the construction proceeds as follows.
\begin{enumerate}
\item\label{st:nbh} \emph{Neighborhood.} Let $\mathcal N_i$ be the set of nodes of $X$ in the
square of side $\ell=\bigl(\tfrac12(d+2)(d+3)\operatorname{area}(Q)/n\bigr)^{1/2}$
centered at the seed, where $Q$ is the bounding box of $X$ and
$n=\abs{X}$ is the number of nodes. If this square
contains fewer than $\tfrac12(d+2)(d+3)$ nodes, it is enlarged by the
factors $1+k/10$, $k=1,2,\dots$, until it does. The side $\ell$ is chosen
so that the square contains, on average, $\tfrac12(d+2)(d+3)=\dim\Pi_{d+1}(\R^2)$
nodes, more than the $m=\tfrac12(d+1)(d+2)$ nodes of a stencil: the
selection in the next steps has nodes to choose from, and the indicator
set $\mathcal E_j=\mathcal N_i\setminus\tau_j$ of Section~\ref{sec:construction} is
nonempty.
\item\label{st:leja} \emph{Leja ordering.} The nodes of $\mathcal N_i$ are listed by increasing
distance from the seed, and their Vandermonde matrix is formed in the
monomial basis centered at the barycenter of $\mathcal N_i$, with the monomials in
graded lexicographic order up to the smallest degree $\bar d$ such that
$\dim\Pi_{\bar d}(\R^2)\ge\abs{\mathcal N_i}$. LU factorization with row pivoting,
carried to completion, orders the nodes of $\mathcal N_i$ as a nested discrete
Leja sequence $\xi_1,\xi_2,\dots$ \cite{BosLeja}: at each step the pivot
row is the node that maximizes, in absolute value, the determinant of the
leading block of the matrix. The first column of the matrix is constant,
so that the first pivot is a tie among all the rows, resolved in favour of
the first one: $\xi_1$ is the seed. Since the basis is graded, the first
$m$ elements of the sequence do not depend on $\bar d$.
\item\label{st:cand} \emph{Candidate stencil.} The candidate is $\{\xi_1,\dots,\xi_m\}$,
the first $m$ nodes of the Leja sequence. It contains the seed, and the
greedy choice of the pivots promotes well-conditioned, hence unisolvent,
configurations; unisolvence is in any case guaranteed by the check of the
next step, since $\sigma_{\min}(V_j)\ge\sigma_0>0$.
\item\label{st:check} \emph{Conditioning check and replacement.} The candidate is
accepted if $\sigma_{\min}(V_j)\ge\sigma_0$, with $V_j$ the scaled
Vandermonde matrix of Assumption~\ref{ass:stencils}(ii) and $\sigma_0$ a
prescribed threshold. Otherwise, the node $\xi_{m+1}$ replaces in turn
$\xi_m,\xi_{m-1},\dots,\xi_2$, then $\xi_{m+2}$ does the same, and so on:
each candidate differs from $\{\xi_1,\dots,\xi_m\}$ in a single node, its
barycenter $a_j$ and its matrix $V_j$ are recomputed, and the first
candidate satisfying the bound is accepted. The seed is never replaced.
If the sequence is exhausted without success, the neighborhood is
enlarged by the same factors $1+k/10$, at most ten more times, and steps
\ref{st:leja}--\ref{st:check} are repeated.
\item\label{st:mark} \emph{Marking.} The nodes of the accepted stencil $\tau_j$ are
marked as used, and the visit of $X$ continues with the next unused node.
\end{enumerate}
Two aspects of this procedure deserve a comment. First, the Vandermonde
matrix of step~\ref{st:leja} may be rank deficient: on gridded neighborhoods, for
instance, the polynomial $u^3-u$, with $u$ the first coordinate, vanishes
on $\{-1,0,1\}^2$. A column with vanishing
pivot gives no selection criterion, and the corresponding node is taken
in the order returned by the factorization. This may change the order in
which the candidates of step~\ref{st:check} are tried, and hence the stencil that is
accepted, but not its properties, since the bound
$\sigma_{\min}(V_j)\ge\sigma_0$ is checked directly on every accepted
stencil. Second, the threshold of step~\ref{st:check} is needed: the greedy selection
of step~\ref{st:cand} promotes well-conditioned configurations, but it does not by
itself guarantee (ii) uniformly in $h$. On Halton nodes with $d=1$, for
instance, the smallest value of $\sigma_{\min}(V_j)$ over the candidates
of step~\ref{st:cand} decreases from $0.26$ to $0.045$ between $l=4$ and $l=7$ in the
notation of Section~\ref{sec:numerics}.

The properties required in this section follow from the construction.
Every node is either a seed or a node of a previously built stencil, so
that $\bigcup_j\tau_j=X$; the stencils are pairwise distinct, since each
of them contains a node, its seed, not used before; and (ii) holds with
the prescribed $\sigma_0$ for every accepted stencil. As for (i), for
quasi-uniform $X$ the side $\ell$ is bounded above and below by multiples
of $h$, and a square of side proportional to $h$ centered at a node
contains the required number of nodes, so that the number of enlargements
in steps \ref{st:nbh} and \ref{st:check} is bounded independently of $h$; since the replacing
nodes of step~\ref{st:check} belong to the neighborhood, every stencil lies in a square
of side proportional to $h$, and (i) holds with an $h$-independent
constant $c_1$. The procedure may fail to find a candidate satisfying the
bound of step~\ref{st:check}, in which case the implementation keeps the candidate
with the largest $\sigma_{\min}(V_j)$ and reports the failure; this never
occurred in the experiments of Section~\ref{sec:numerics}, which use
$\sigma_0=0.05$.

No regularity of $\partial\Omega$ enters the proofs below, since
Assumption~\ref{ass:stencils} and the $C^{d+1}$ extensions of the
target functions are required explicitly. Some regularity is, however,
needed for these hypotheses to be satisfiable with $h$-independent
constants: near $\partial\Omega$, well-conditioned stencils of diameter
$O(h)$ exist if $\Omega$ satisfies an interior cone condition, and every
$f\in C^{d+1}(\overline\Omega)$ admits the required extension if, for
instance, $\Omega$ is a bounded Lipschitz domain: such domains are
quasiconvex, i.e.\ they have Whitney's property~(P), and for regions
with this property every function of class $C^{d+1}$ whose derivatives
of order $d+1$ extend continuously to $\partial\Omega$ can be extended
to a $C^{d+1}$ function on $\R^s$ \cite{WhitneyBoundary}, by means of
Whitney's extension theorem \cite{Whitney}.
Both conditions hold for the square used in
Section~\ref{sec:numerics}.
Under Assumption~\ref{ass:stencils}, standard estimates for polynomial
interpolation on unisolvent sets give, for
$g\in C^{d+1}(\overline{B(a_j,c_2h)})$ and every
$y\in\overline{B(a_j,c_2h)}$,
\begin{equation}\label{eq:localerr}
\abs{g(y)-P_j[g](y)}
\le C\Lambda\,h^{d+1}\abs{g}_{d+1,B(a_j,c_2h)},
\qquad
\abs{g}_{d+1,A}
:=\max_{\abs{\alpha}=d+1}\norm{D^\alpha g}_{\infty,A},
\end{equation}
with $C=C(s,d,c_1,c_E)$; here $\norm{g}_{\infty,A}=\sup_A\abs{g}$ for a set
$A$, and the subscript is omitted when the set is clear from the context. Throughout the
paper, $C$ denotes a constant depending only on the parameters indicated
at each occurrence, whose value may change from one occurrence to the
next.

\subsection{Detection properties of the indicator}
We consider a piecewise smooth target. Let $\Gamma\subset\Omega$ be a
$C^1$ separating hypersurface, given as the zero set of
$\gamma\in C^1(\Omega)$ with $\nabla\gamma\ne0$ on a tubular
neighborhood $\mathcal T$ of $\Gamma$ of uniform width, that is, $\mathcal
T\supseteq\{x\in\Omega:\dist(x,\Gamma)<\delta_{\mathcal T}\}$ for some
$\delta_{\mathcal T}>0$; set
$\Omega^+=\{\gamma\ge0\}$, $\Omega^-=\Omega\setminus\Omega^+$
(points of $\Gamma$ belong to $\Omega^+$), and
\begin{equation}\label{eq:ftilde}
\tilde f(x)=
\begin{cases}
f_1(x), & x\in\Omega^+,\\
f_2(x), & x\in\Omega^-.
\end{cases}
\end{equation}
We assume that $f_1$ and $f_2$ admit extensions of class $C^{d+1}$ to
fixed open sets
$U_+\supset\overline{\Omega^+}\cup\mathcal T$ and
$U_-\supset\overline{\Omega^-}\cup\mathcal T$, respectively. The
branch seminorms $\abs{f_1}_{d+1}$ and $\abs{f_2}_{d+1}$ are taken on
$U_+$ and $U_-$, respectively, and are therefore independent of $h$. We also assume that, for some $h_0>0$ and all $h\le h_0$, the
balls $\overline{B(a_j,c_2h)}$ appearing below are contained in the
relevant $U_\pm$, and that the jump satisfies
$\abs{f_1-f_2}\ge J_0>0$ on $\mathcal T$.

\begin{lemma}\label{lem:P1P2}
Let Assumption~\ref{ass:stencils} hold and let
$\tau_j\cup \mathcal E_j\subset B(a_j,c_2h)$, with $B(a_j,c_2h)$ contained in
the domain of the relevant extension.
\begin{itemize}
\item[\rm(P1)] If $f$ coincides on $B(a_j,c_2h)$ with a function of
class $C^{d+1}$, then
\[
\Ind{j}\le C\Lambda h^{d+1}
\abs{f}_{d+1,B(a_j,c_2h)}.
\]
\item[\rm(P2)] If the datum is $\tilde f$ of \eqref{eq:ftilde}, and
$\tau_j\subset\Omega^+$ while $\mathcal E_j$
contains at least one point of $\Omega^-\cap\mathcal T$, or
$\tau_j\subset\Omega^-$ while $\mathcal E_j$ contains at least one point of
$\Omega^+\cap\mathcal T$, then
\[
\Ind{j}\ge \frac{J_0}{2N_E}>0
\]
for all $h$ small enough.
\end{itemize}
\end{lemma}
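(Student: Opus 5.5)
For (P1) I would apply the local interpolation estimate \eqref{eq:localerr} directly. Let $g$ be the $C^{d+1}$ function that coincides with $f$ on $B(a_j,c_2h)$. Since $\tau_j\subset B(a_j,c_2h)$, the interpolant $P_j[f]$ equals $P_j[g]$. Each $x_i\in\mathcal E_j$ also lies in $B(a_j,c_2h)$, so \eqref{eq:localerr} at $y=x_i$ gives $\abs{f(x_i)-P_j[f](x_i)}\le C\Lambda h^{d+1}\abs{g}_{d+1,B(a_j,c_2h)}$. The indicator \eqref{eq:indicator} is an average of such terms, so it obeys the same bound. This yields (P1), with $\abs{f}_{d+1,B(a_j,c_2h)}$ read as the seminorm of $g$.

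For (P2) I would treat the case $\tau_j\subset\Omega^+$ with a point $y\in\mathcal E_j\cap\Omega^-\cap\mathcal T$; the other case is symmetric. The indicator has nonnegative summands and $\abs{\mathcal E_j}\le N_E$, so
\[
\Ind{j}\ge\frac{1}{N_E}\abs{\tilde f(y)-P_j[\tilde f](y)}.
\]
On $\tau_j$ the datum coincides with $f_1$, hence $P_j[\tilde f]=P_j[f_1]$, where $f_1$ is taken as its $C^{d+1}$ extension to $U_+\supset\overline{B(a_j,c_2h)}$. At $y$ we have $\tilde f(y)=f_2(y)$. I would then split
\[
\abs{f_2(y)-P_j[f_1](y)}\ge\abs{f_2(y)-f_1(y)}-\abs{f_1(y)-P_j[f_1](y)}.
\]
The first term is at least $J_0$ because $y\in\mathcal T$. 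The second is at most $C\Lambda h^{d+1}\abs{f_1}_{d+1}$ by \eqref{eq:localerr}, with the seminorm taken on $U_+$ and therefore independent of $h$. Choosing $h$ small enough that this error is at most $J_0/2$ gives $\Ind{j}\ge J_0/(2N_E)$.

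The main obstacle is the bookkeeping that justifies evaluating the $f_1$ extension at the point $y$, which lies on the wrong side of $\Gamma$. This needs $y\in\overline{B(a_j,c_2h)}\subset U_+$, which holds by the hypothesis on the balls together with the earlier observation that indicator points lie in $\overline{B(a_j,c_2h)}$. Once that is in place, the threshold on $h$ depends only on $C,\Lambda,\abs{f_1}_{d+1},\abs{f_2}_{d+1},J_0$, so it is uniform in $j$.
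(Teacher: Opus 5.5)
Your proposal is correct and follows the paper's proof. For (P1) you read the bound off \eqref{eq:localerr} at the indicator points. For (P2) you use $P_j[\tilde f]=P_j[f_1]$, the reverse triangle inequality with the jump bound $J_0$ on $\mathcal T$, and $\Ind{j}\ge\abs{\tilde f(y)-P_j[\tilde f](y)}/N_E$. Your justification for evaluating the $f_1$ extension at $y$, and your treatment of the opposite case as symmetric (since $\tilde f=f_2$ on $\tau_j\subset\Omega^-$), are both sound.
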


\begin{proof}
(P1) is immediate from \eqref{eq:localerr}. For (P2), let
$x_i\in \mathcal E_j\cap\Omega^-\cap\mathcal T$. Since
$\tau_j\subset\Omega^+$,
$P_j[\tilde f]=P_j[f_1]$ interpolates the extended smooth branch $f_1$
and, by \eqref{eq:localerr} applied to the extension,
\[
\abs{f_1(x_i)-P_j[\tilde f](x_i)}\le C\Lambda h^{d+1}\abs{f_1}_{d+1}.
\]
Since $x_i\in\mathcal T$, we have
$\abs{f_2(x_i)-f_1(x_i)}\ge J_0$. Hence
\[
\begin{aligned}
\abs{\tilde f(x_i)-P_j[\tilde f](x_i)}
&\ge
\abs{f_2(x_i)-f_1(x_i)}
-\abs{f_1(x_i)-P_j[\tilde f](x_i)}\\
&\ge J_0-C\Lambda h^{d+1}\abs{f_1}_{d+1}
\ge \frac{J_0}{2}
\end{aligned}
\]
for $h$ small enough. The claim then follows from
\[
\Ind{j}\ge
\frac{\abs{\tilde f(x_i)-P_j[\tilde f](x_i)}}{N_E}.
\]
In the opposite case no node of $\tau_j$ lies on $\Gamma\subset\Omega^+$,
so that $P_j[\tilde f]=P_j[f_2]$, and the same argument applies with $f_1$
and $f_2$ exchanged.
\end{proof}
Lemma~\ref{lem:P1P2}(P2) provides a \emph{sufficient} detection
condition: it covers stencils that lie on one side of $\Gamma$ while
their indicator set reaches the opposite side within the tubular
neighborhood $\mathcal T$. If $\Omega$ is convex, the latter requirement is not
restrictive for small $h$: the segment joining a node of $\tau_j$ to a
point of $\mathcal E_j\cap\Omega^-$ lies in $\Omega\cap B(a_j,c_2h)$ and, by
continuity of $\gamma$, meets $\Gamma$, so that every point of
$\mathcal E_j\cap\Omega^-$ lies within $2c_2h$ of $\Gamma$ and hence in
$\mathcal T$ for $h$ small enough. In a nonconvex domain, a point of
$\mathcal E_j\cap\Omega^-$ may instead be close to $\tau_j$ across
$\partial\Omega$ rather than across $\Gamma$, and the restriction to
$\mathcal T$ is then essential.
Lemma~\ref{lem:P1P2}(P2) does \emph{not} cover stencils whose
interpolation nodes themselves straddle $\Gamma$; Remark~\ref{rem:alias} in Section~\ref{subsec:jump}
shows, by an explicit configuration, that this limitation is essential.

\subsection{Structural properties}

\begin{proposition}\label{prop:interp}
For every $f$ and every choice of $\epsilon>0$, $t\ge1$:
\begin{itemize}
\item[\rm(a)] $\wt B_{\mu,j}[f](x)\ge0$ and
\[
\sum_{j\in J_x}\wt B_{\mu,j}[f](x)=1
\]
for every covered $x\notin X$; consequently, $\Kop$ reproduces
$\Pd$ on the covered set.

\item[\rm(b)] The definition of $\Kop[f]$,
given by \eqref{eq:Kdef} for $x\notin X$ and by
$\Kop[f](x_i)=f_i$ at the data sites, is continuous: for every
$x_i\in X$,
\[
\lim_{x\to x_i}\Kop[f](x)=f_i.
\]

\item[\rm(c)] On $\Omega \setminus X$, $\Kop[f]$ has at
least the regularity guaranteed by the compact weights, namely
$C^{\lfloor\mu\rfloor}$ if $\mu\notin\N$ and $C^{\mu-1}$ if
$\mu\in\N$. In particular, even integer values of $\mu$ do not yield
$C^\infty$ regularity across support boundaries.

\item[\rm(d)] For every covered $x\notin X$,
\[
\min_{j\in J_x}P_j[f](x)
\le \Kop[f](x)
\le \max_{j\in J_x}P_j[f](x).
\]
\end{itemize}
\end{proposition}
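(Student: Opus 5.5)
Parts (a) and (d) are immediate from the definition \eqref{eq:Bdef}; (b) and (c) need the local behaviour of the compact weights near nodes and near support boundaries.

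\textbf{Part (a).} Fix a covered $x\notin X$. For $j\in J_x$ every factor $\bigl(\norm{x-x_{j_i}}^{-1}-R_j^{-1}\bigr)_+$ is strictly positive, since $x\in\intr(C_j)$ gives $\norm{x-x_{j_i}}<R_j$ and $x\ne x_{j_i}$. Hence $\wt W_{\mu,j}(x)>0$. Because $\epsilon>0$ and $\Ind{j}\ge0$, the factor $(\epsilon+\Ind{j})^{-t}$ is finite and positive. So each $\beta_{\mu,j}[f](x)>0$, the denominator in \eqref{eq:Bdef} is positive, and the normalised weights are nonnegative and sum to one. Reproduction follows because $p\in\Pd$ gives $P_j[p]=p$ for every $j$ and $\Ind{j}=0$; then $\Kop[p](x)=p(x)\sum_j\wt B_{\mu,j}[p](x)=p(x)$.

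\textbf{Part (d).} This is a convex-combination statement: by (a), $\Kop[f](x)$ is a convex combination of the values $P_j[f](x)$, $j\in J_x$, so it lies between their minimum and maximum.

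\textbf{Part (b).} The key observation is that the indicators $\Ind{j}$ depend only on the data, not on $x$. Write $\gamma_j:=(\epsilon+\Ind{j})^{-t}\in(0,\epsilon^{-t}]$ and let $\omega_j(x)$ be the unnormalised numerator of \eqref{eq:CMSweights}. Then
\[
\wt B_{\mu,j}[f](x)=\frac{\gamma_j\omega_j(x)}{\sum_{k\in J_x}\gamma_k\omega_k(x)}.
\]
Since $\omega_k=0$ outside $\intr(C_k)$, the sum may be taken over all $k$, which removes the $x$-dependence of the index set.

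Now fix $x_i$ and let $J_i$ be the set of indices $j$ with $x_i\in\tau_j$; it is nonempty by the covering property. By Lemma~\ref{lem:coverage}, $x_i\in\intr(C_j)$ for every $j\in J_i$. Multiply numerator and denominator by $\norm{x-x_i}^{\mu}$. For $j\in J_i$, the quantity $\norm{x-x_i}^{\mu}\omega_j(x)$ tends to a positive limit: the factor at $x_i$ tends to $1$ and the other factors tend to positive values, since the remaining nodes differ from $x_i$ and lie inside the ball. For $j\notin J_i$, $\omega_j$ stays bounded near $x_i$, so $\norm{x-x_i}^{\mu}\omega_j(x)\to0$. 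Consequently $\wt B_{\mu,j}[f]\to0$ for $j\notin J_i$, and the limits for $j\in J_i$ sum to one. Since each $P_j[f]$ is continuous and $P_j[f](x_i)=f_i$ for $j\in J_i$, we get $\Kop[f](x)\to f_i$.

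\textbf{Part (c).} Each $\omega_j$ is a finite product of functions $\bigl(\norm{x-x_{j_i}}^{-1}-R_j^{-1}\bigr)_+^{\mu}$, and away from nodes each such factor is $C^\infty$ except on the sphere $\norm{x-x_{j_i}}=R_j$.
\begin{itemize}
\item Across that sphere, $z_+^\mu$ with $z$ smooth and $\nabla z\ne0$ is $C^{\lfloor\mu\rfloor}$ for $\mu\notin\N$ and $C^{\mu-1}$ for $\mu\in\N$. This includes even $\mu$, since $z_+^{\mu}$ has a jump in its $\mu$-th derivative; this gives the remark on even $\mu$.
\item Products of such factors keep this regularity.
\item The denominator $\sum_k\gamma_k\omega_k$ is positive on the covered set by (a) and Lemma~\ref{lem:coverage}, and $\Omega$ is covered. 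Hence the quotients, and the finite sums with polynomial coefficients $P_j[f]$, inherit the same regularity on $\Omega\setminus X$.
\end{itemize}

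\textbf{Main obstacle.} The step needing most care is (b), because $J_x$ varies with $x$. I would handle it by the rewriting over all $k$ with $\omega_k$ vanishing off the supports. I would also check that the factorwise truncation is consistent with that rewriting at points on support boundaries: there $\omega_k=0$ and the factor vanishes to order $\mu$, so nothing is lost.
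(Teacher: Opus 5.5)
Your proposal is correct and follows essentially the same route as the paper. It uses the same rewriting of $\wt B_{\mu,j}$ with the $x$-independent coefficients $(\epsilon+\Ind{j})^{-t}$ summed over all stencils, the same $\norm{x-x_i}^{\mu}$ rescaling for (b), and the same local analysis of the truncated factors across the spheres $\norm{x-x_{j_\nu}}=R_j$ for (c). The only thing to add is that reproduction of $\Pd$ at the data sites, which are part of the covered set, holds directly from the definition $\Kop[p](x_i)=p(x_i)$.
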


\begin{proof}
For $x\in\Omega\setminus X$, define the unnormalized compact weights by
\[
\wt W^u_{\mu,j}(x)
:=
\prod_{\nu=1}^{m}
\left(
\frac{1}{\norm{x-x_{j_\nu}}}-\frac{1}{R_j}
\right)_{+}^{\mu},
\qquad j=1,\dots,r,
\]
and set
\[
\omega_j:=(\epsilon+\Ind{j})^{-t}>0.
\]
Substituting the expression of $\wt W_{\mu,j}$ from
\eqref{eq:CMSweights} into \eqref{eq:Bdef}, we obtain, for
$j\in J_x$,
\begin{equation}\label{eq:Bunnormalized}
\wt B_{\mu,j}[f](x)
=
\frac{\omega_j\wt W^u_{\mu,j}(x)}
{\displaystyle\sum_{k=1}^{r}\omega_k\wt W^u_{\mu,k}(x)}.
\end{equation}
Indeed, $\wt W^u_{\mu,k}(x)=0$ whenever $k\notin J_x$.

\begin{itemize}
\item[\rm(a)]
Nonnegativity and normalization follow immediately from
\eqref{eq:Bunnormalized}. If $p\in\Pd$, then $P_j[p]=p$ by
unisolvence, and hence, for $x\notin X$,
\[
\Kop[p](x)
=
p(x)\sum_{j\in J_x}\wt B_{\mu,j}[p](x)
=
p(x).
\]
At the data sites, polynomial reproduction follows directly from the
definition of $\Kop$.

\item[\rm(b)]
Fix $x_i\in X$ and set
\[
\rho:=\norm{x-x_i},
\]
and let $J_i=\{j:x_i\in\tau_j\}$ be the set introduced in
Proposition~\ref{prop:W}.
The set $J_i$ is nonempty since the stencil family covers $X$.
Let $j\in J_i$ and let $x_{j_q}=x_i$. Then
\[
\rho^\mu\wt W^u_{\mu,j}(x)
\longrightarrow
w_{ij}
:=
\prod_{\substack{\nu=1\\ \nu\ne q}}^{m}
\left(
\frac{1}{\norm{x_i-x_{j_\nu}}}
-\frac{1}{R_j}
\right)^{\mu}
>0
\]
as $x\to x_i$. Indeed,
\[
\norm{x_i-x_{j_\nu}}
\le \diam(\tau_j)<R_j,
\qquad \nu\ne q,
\]
so all the remaining factors have positive finite limits.

If $j\notin J_i$, no factor of $\wt W^u_{\mu,j}$ is singular at
$x_i$, and therefore
\[
\wt W^u_{\mu,j}(x)=O(1),
\qquad
\rho^\mu\wt W^u_{\mu,j}(x)\longrightarrow0.
\]
Consequently,
\[
\rho^\mu
\sum_{k=1}^{r}\omega_k\wt W^u_{\mu,k}(x)
\longrightarrow
\sum_{k\in J_i}\omega_k w_{ik}>0.
\]
It follows from \eqref{eq:Bunnormalized} that
\[
\wt B_{\mu,j}[f](x)
\longrightarrow
\begin{cases}
\displaystyle
\frac{\omega_j w_{ij}}
{\sum_{k\in J_i}\omega_k w_{ik}},
& j\in J_i,\\[3mm]
0,
& j\notin J_i.
\end{cases}
\]
The nonzero limits are nonnegative and sum to one. Since
$P_j[f](x_i)=f_i$ for every $j\in J_i$, we conclude that
\[
\lim_{x\to x_i}\Kop[f](x)
=
\sum_{j\in J_i}
\frac{\omega_j w_{ij}}
{\sum_{k\in J_i}\omega_k w_{ik}}
P_j[f](x_i)
=
f_i.
\]

\item[\rm(c)]
For fixed $f$, the coefficients $\omega_j$ in
\eqref{eq:Bunnormalized} are positive constants with respect to $x$.
Away from the data sites, a factor of $\wt W^u_{\mu,j}$ may lose
smoothness only across a support boundary
\[
\norm{x-x_{j_\nu}}=R_j.
\]
Writing $\varrho(x)=\norm{x-x_{j_\nu}}$, near such a boundary one has
\[
\left(
\frac{1}{\varrho(x)}-\frac{1}{R_j}
\right)_{+}^{\mu}
=
\frac{(R_j-\varrho(x))_{+}^{\mu}}
{(\varrho(x)R_j)^{\mu}}.
\]
The denominator is smooth and strictly positive there, so the
regularity is governed by $(R_j-\varrho(x))_{+}^{\mu}$. Hence each factor
is of class $C^{\lfloor\mu\rfloor}$ if $\mu\notin\N$ and
$C^{\mu-1}$ if $\mu\in\N$.

Products and finite sums preserve at least this regularity. Moreover,
the denominator in \eqref{eq:Bunnormalized} is strictly positive on
$\Omega\setminus X$ by the covering property. Therefore the nonlinear
weights, and hence $\Kop[f]$, have the stated regularity on
$\Omega\setminus X$, since the local approximants $P_j[f]$ are
polynomials. At the data sites, part (b) guarantees continuity;
higher regularity there is not addressed here.

\item[\rm(d)]
By part (a), $\Kop[f](x)$ is a convex combination of the values
$P_j[f](x)$. Therefore,
\[
\min_{j\in J_x}P_j[f](x)
\le \Kop[f](x)
\le \max_{j\in J_x}P_j[f](x).
\]
\end{itemize}
\end{proof}

\subsection{Convergence in smooth regions}

\begin{theorem}[order in smooth regions]\label{th:smooth}
Let Assumption~\ref{ass:stencils} hold and let $f$ admit a
$C^{d+1}$ extension to a fixed open set containing
\[
\bigcup_j \overline{B(a_j,c_2h)}
\]
for all sufficiently small $h$. Then, for any $\epsilon>0$ and
$t\ge1$,
\[
\norm{f-\Kop[f]}_{\infty,\Omega}
\le
C\,h^{d+1}\,\abs{f}_{d+1},
\]
where $C=C(s,d,c_1,c_E,\sigma_0)$ is independent of $h$, $\epsilon$ and
$t$, and $\abs{f}_{d+1}$ denotes the seminorm of the extension.
\end{theorem}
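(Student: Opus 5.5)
The plan is to exploit the convex-combination structure of Proposition~\ref{prop:interp}(a), so that the nonlinear weights never need to be estimated. Fix $x\in\Omega$. If $x=x_i\in X$, then $\Kop[f](x_i)=f_i$ by definition and the error vanishes. Otherwise, Lemma~\ref{lem:coverage} shows that $x$ is covered, so $J_x\neq\emptyset$. By Proposition~\ref{prop:interp}(a), the weights $\wt B_{\mu,j}[f](x)$, $j\in J_x$, are nonnegative and sum to one. Hence
\[
\abs{f(x)-\Kop[f](x)}
=\Bigl\lvert\sum_{j\in J_x}\wt B_{\mu,j}[f](x)\bigl(f(x)-P_j[f](x)\bigr)\Bigr\rvert
\le\max_{j\in J_x}\abs{f(x)-P_j[f](x)}.
\]
This bound is independent of the values of the indicators $\Ind{j}$, of $\epsilon$ and of $t$. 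That independence is exactly why the constant in the theorem does not depend on these parameters.

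Next, I would check that for each active $j\in J_x$ the point $x$ lies in the region where the local estimate \eqref{eq:localerr} applies. Since $j\in J_x$, we have $x\in\intr(C_j)\subset\overline{C_j}$. Then \eqref{eq:barycentric} gives $\norm{x-a_j}\le R_j\le c_2h$, so $x\in\overline{B(a_j,c_2h)}$. By hypothesis, $f$ coincides on this ball with its $C^{d+1}$ extension $g$. Because $P_j[f]$ depends only on the values at the nodes of $\tau_j$, which lie in the same ball, we have $P_j[f]=P_j[g]$. Applying \eqref{eq:localerr} with $y=x$ then yields
\[
\abs{f(x)-P_j[f](x)}\le C\Lambda h^{d+1}\abs{g}_{d+1,B(a_j,c_2h)}\le C\Lambda h^{d+1}\abs{f}_{d+1},
\]
where the last seminorm is taken on the fixed open set containing all the balls.

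Finally, I would collect the constants. Here $C=C(s,d,c_1,c_E)$ and $\Lambda=\Lambda(s,d,c_1,c_E,\sigma_0)$, so their product depends only on $(s,d,c_1,c_E,\sigma_0)$. Taking the maximum over $j\in J_x$ and the supremum over $x\in\Omega$ gives the claim.

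There is no real analytic obstacle. The only points requiring care are the following.
\begin{itemize}
\item The uniform Lebesgue-constant bound $\Lambda$ must be valid on the enlarged ball $B(a_j,c_2h)$, not just on the convex hull of $\tau_j$. The paper asserts this after Assumption~\ref{ass:stencils}; I would take it as given, noting briefly that it follows from scaling to the unit ball and using $\sigma_{\min}(V_j)\ge\sigma_0$ to bound the Lagrange basis coefficients.
\item The estimate \eqref{eq:localerr} must be legitimately applied at points of $\partial\Omega$ or near it, which the hypothesis on the extension handles.
\item The data-site case must be treated separately, since the nonlinear weights are undefined there.
\end{itemize}
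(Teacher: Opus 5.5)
Your proposal is correct and follows essentially the same route as the paper's proof: you use Lemma~\ref{lem:coverage} for coverage, the localization $\norm{x-a_j}\le c_2h$ from \eqref{eq:barycentric}, the local estimate \eqref{eq:localerr} for each active stencil and the convex-combination structure of Proposition~\ref{prop:interp}(a), and you treat the data sites directly via interpolation. Your extra remark that $P_j[f]=P_j[g]$, because the stencil nodes and $x$ lie in $\Omega$ where $f$ and its extension agree, makes explicit a point the paper leaves implicit.
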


\begin{proof}
Fix $x\in\Omega\setminus X$. By Lemma~\ref{lem:coverage},
$J_x\neq\emptyset$. For every $j\in J_x$, one has
$x\in C_j$, and hence \eqref{eq:barycentric} gives
\[
\norm{x-a_j}\le c_2h.
\]
Therefore $x\in\overline{B(a_j,c_2h)}$, and
\eqref{eq:localerr} yields
\[
\abs{f(x)-P_j[f](x)}
\le
Ch^{d+1}\abs{f}_{d+1}.
\]
Using Proposition~\ref{prop:interp}(a),
\[
\begin{aligned}
\abs{f(x)-\Kop[f](x)}
&=
\left|
\sum_{j\in J_x}
\wt B_{\mu,j}[f](x)
\bigl(f(x)-P_j[f](x)\bigr)
\right|\\
&\le
\sum_{j\in J_x}
\wt B_{\mu,j}[f](x)
\abs{f(x)-P_j[f](x)}\\
&\le
Ch^{d+1}\abs{f}_{d+1}.
\end{aligned}
\]
At the data sites the estimate is immediate from interpolation.
Taking the supremum over $\Omega$ proves the result.

No additional restriction on $t$ or $\epsilon$ is needed: in a smooth
region all active local interpolants already achieve the same
$O(h^{d+1})$ accuracy, and any convex combination of them preserves
this order.
\end{proof}

\subsection{Behaviour near a discontinuity}\label{subsec:jump}
Let $\tilde f$ be as in \eqref{eq:ftilde} and fix an evaluation point
$x\in\Omega^{\pm}\setminus X$. By Lemma~\ref{lem:coverage},
$J_x\neq\emptyset$. We call an active stencil
$j\in J_x$ \emph{one-sided} if
\[
\tau_j\subset\Omega^+
\qquad\text{or}\qquad
\tau_j\subset\Omega^-,
\]
and \emph{straddling} otherwise.

\begin{assumption}[indicator separation at $x$]\label{ass:separation}
There exist $\theta,\gamma_0\in(0,1)$ and $C_G,c_B>0$, independent of
$h$, such that:
\begin{itemize}
\item[\rm(A1)] if $x\in\Omega^+$ (respectively, $x\in\Omega^-$),
then every one-sided active stencil satisfies
$\tau_j\subset\Omega^+$ (respectively, $\tau_j\subset\Omega^-$);
\item[\rm(A2)] there exists a one-sided active stencil
$j^*\in J_x$ such that
\[
\norm{x-x_{j^*_i}}
\le (1-\theta)R_{j^*},
\qquad i=1,\dots,m,
\]
and
\[
\Ind{j^*}\le C_Gh^{d+1};
\]
\item[\rm(A3)] every active stencil $j\in J_x$ that is not
one-sided satisfies
\[
\Ind{j}\ge c_B;
\]
\item[\rm(A4)] every active stencil $j\in J_x$ that is not
one-sided satisfies
\[
\norm{x-x_{j_i}}\ge \gamma_0 q_X,
\qquad i=1,\dots,m.
\]
\end{itemize}
\end{assumption}
The four conditions have the following meaning.
Condition (A1) excludes one-sided active stencils belonging to the
opposite branch. Such a stencil interpolates smooth data and has a small
indicator, exactly like the one-sided stencils of the correct branch, so
that the nonlinear weighting cannot tell the two apart: if it is active,
it contributes a value of the wrong branch with a weight of order one,
and the error at $x$ is of the order of the jump. This is what happens
at the few points near the corner of the square geometry of
Section~\ref{sec:numerics} where the operator loses accuracy.

Condition (A2) requires a \emph{reference} stencil: one-sided, on the
correct side, smooth in the sense of the indicator, and with $x$ at a
positive distance from the boundary of its support. The geometric margin
plays the same role as the lower-bound condition imposed on the radial
weight in \cite[Theorem~3]{LRRY}: it prevents the unnormalized weight of
the reference stencil from becoming arbitrarily small near the boundary
of its support, and it is with this weight that the weights of the
straddling stencils are compared in the proof. The bound
\[
\Ind{j^*}\le C_Gh^{d+1}
\]
holds, by Lemma~\ref{lem:P1P2}(P1) applied to the extended branch,
whenever the indicator set $\mathcal E_{j^*}$ lies on the same side of $\Gamma$
as $\tau_{j^*}$. Without a
reference stencil, all the active stencils may straddle $\Gamma$, and no
choice of the weights can produce an accurate value: this is what
happens near the points where the line geometry of
Section~\ref{sec:numerics} meets $\partial\Omega$.

The active stencils subject to (A3) are precisely the straddling ones;
under (A1), all the other active stencils lie on the side of $x$, so
that (A3) is the indicator-separation hypothesis proper:
it asks that the indicator detects every active straddling stencil, with
a lower bound $c_B$ independent of $h$, so that the nonlinear weighting
damps its contribution by a factor of order $(h^{d+1}/c_B)^t$.
Lemma~\ref{lem:P1P2}(P2) does not cover this case, since it concerns
one-sided stencils whose indicator set crosses $\Gamma$, and the
following configuration shows that the lower bound in (A3) cannot be
deduced from Definition~\ref{def:indicator} alone.
\begin{remark}[an undetected straddling stencil]\label{rem:alias}
Let $s=2$, $d=1$, and write $(u,v)$ for the coordinates in $\R^2$. Let
$\Gamma=\{u=0\}$ and let $\tilde f$ be the unit step,
\[
\tilde f(u,v)=
\begin{cases}
0, & u<0,\\
1, & u\ge0.
\end{cases}
\]
Consider
\[
\tau_j=\{(-h,0),\,(h,-h),\,(h,h)\},
\qquad
\mathcal E_j=\{(-h,h)\}.
\]
The stencil is unisolvent, satisfies
Assumption~\ref{ass:stencils} uniformly in $h$, and straddles
$\Gamma$. Its linear interpolant is
\[
P_j(u,v)=\frac{u+h}{2h}.
\]
Since $P_j(-h,h)=0=\tilde f(-h,h)$, one has
\[
\Ind{j}=0,
\]
although the stencil crosses a jump of size one. Hence, the residual
indicator of Definition~\ref{def:indicator} does not, in general,
provide a uniform positive lower bound for arbitrary straddling
stencils, and the lower bound required in (A3) must be imposed
separately.
\end{remark}
Finally, (A4) keeps $x$ uniformly separated, at the scale $q_X$, from
the nodes of the straddling stencils. The compact multinode weights are
singular at the nodes, and a straddling stencil with a node very close
to $x$ would have an unnormalized weight large enough to compensate the
damping produced by (A3); (A4) bounds this weight in terms of $q_X$, and
it is used only in the weight comparison of the proof.
\begin{theorem}[behaviour near the discontinuity]\label{th:jump}
Let Assumptions~\ref{ass:stencils} and~\ref{ass:separation} hold,
let $0<\epsilon\le h^{d+1}$, and let $t\ge1$ be fixed. Then, for all
sufficiently small $h$,
\[
\abs{\tilde f(x)-\Kop[\tilde f](x)}
\le
K_1h^{d+1}+K_2h^{(d+1)t},
\]
where
\[
K_1=
C(s,d,c_1,c_E,\sigma_0)
\max\{\abs{f_1}_{d+1},\abs{f_2}_{d+1}\},
\]
and
\[
K_2=
N_{\mathrm{act}}
\Bigl(\frac{c_2c_{qu}}{\gamma_0\theta}\Bigr)^{\mu m}
\Bigl(\frac{1+C_G}{c_B}\Bigr)^t
(1+\Lambda)\norm{\tilde f}_{\infty}.
\]
Here $N_{\mathrm{act}}$ is an $h$-independent upper bound on
$\abs{J_x}$, established in the proof. In particular, for every
fixed $t\ge1$,
\[
\abs{\tilde f(x)-\Kop[\tilde f](x)}
=O(h^{d+1}).
\]
The constants are independent of $h$ and $\epsilon$; their dependence
on the remaining parameters is displayed above, together with the
dependence on the $C^{d+1}$ seminorms of the extended branches.
\end{theorem}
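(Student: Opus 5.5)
The plan is to split the active set $J_x$ into the one-sided stencils $G$ and the straddling stencils $B=J_x\setminus G$. By Proposition~\ref{prop:interp}(a) the weights form a convex combination, so
\[
\abs{\tilde f(x)-\Kop[\tilde f](x)}
\le\sum_{j\in G}\wt B_{\mu,j}\abs{\tilde f(x)-P_j[\tilde f](x)}
+\sum_{j\in B}\wt B_{\mu,j}\abs{\tilde f(x)-P_j[\tilde f](x)}.
\]

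For the first sum, (A1) puts every $\tau_j$ with $j\in G$ on the side of $x$, say $\Omega^+$. Then $P_j[\tilde f]=P_j[f_1]$ and $\tilde f(x)=f_1(x)$. Since $x\in C_j$, \eqref{eq:barycentric} gives $x\in\overline{B(a_j,c_2h)}\subset U_+$, and \eqref{eq:localerr} bounds each error by $C\Lambda h^{d+1}\abs{f_1}_{d+1}$. The weights in $G$ sum to at most one, so this part is at most $K_1h^{d+1}$, exactly as in Theorem~\ref{th:smooth}.

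For the second sum, each error is bounded by $\abs{\tilde f(x)}+\abs{P_j[\tilde f](x)}\le(1+\Lambda)\norm{\tilde f}_\infty$, using the Lebesgue constant bound on $B(a_j,c_2h)$. It then remains to show that each $\wt B_{\mu,j}$ with $j\in B$ is small. Using \eqref{eq:Bunnormalized}, we bound the denominator below by the single reference term $j^*$ from (A2):
\[
\wt B_{\mu,j}\le\frac{\omega_j\wt W^u_{\mu,j}(x)}{\omega_{j^*}\wt W^u_{\mu,j^*}(x)}.
\]
The indicator ratio is handled by (A2), (A3) and $\epsilon\le h^{d+1}\le1$:
\[
\frac{\omega_j}{\omega_{j^*}}=\Bigl(\frac{\epsilon+\Ind{j^*}}{\epsilon+\Ind{j}}\Bigr)^t\le\Bigl(\frac{(1+C_G)h^{d+1}}{c_B}\Bigr)^t.
\]
The geometric ratio is handled factorwise. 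For $j\in B$, (A4) gives
\[
\frac1{\norm{x-x_{j_i}}}-\frac1{R_j}\le\frac1{\gamma_0q_X}.
\]
For $j^*$, (A2) gives $\norm{x-x_{j^*_i}}\le(1-\theta)R_{j^*}$, and hence
\[
\frac1{\norm{x-x_{j^*_i}}}-\frac1{R_{j^*}}\ge\frac{1}{R_{j^*}}\Bigl(\frac1{1-\theta}-1\Bigr)\ge\frac{\theta}{R_{j^*}}\ge\frac{\theta}{c_2h}.
\]
Multiplying the $m$ factors and raising to the power $\mu$, the ratio of unnormalized weights is at most $\bigl(c_2h/(\gamma_0\theta q_X)\bigr)^{\mu m}\le\bigl(c_2c_{qu}/(\gamma_0\theta)\bigr)^{\mu m}$, using $h\le c_{qu}q_X$. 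Summing over $j\in B$ with $\abs{B}\le N_{\mathrm{act}}$ produces the term $K_2h^{(d+1)t}$. Since $t\ge1$, the total is $O(h^{d+1})$.

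The remaining, and I expect most delicate, step is the $h$-independent bound $N_{\mathrm{act}}$ on $\abs{J_x}$. Every active stencil has a node within $R_j\le c_2h$ of $x$, so it contains a node of $X\cap B(x,c_2h)$. A volume-packing argument with separation $q_X\ge h/c_{qu}$ bounds the number of such nodes by $C(s)(1+c_2c_{qu})^s$. The stencils themselves lie inside $B(x,(c_2+c_1)h)$, which likewise contains a bounded number $N$ of nodes. Because stencils are distinct $m$-subsets, there are at most $\binom{N}{m}$ of them. This constant depends only on $s,d,c_1,c_E,c_{qu}$, which yields $N_{\mathrm{act}}$.
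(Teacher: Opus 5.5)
Your proposal is correct and follows the paper's proof essentially step by step: the same split of $J_x$ into one-sided and straddling stencils, the same comparison of each straddling weight with the single reference term $j^*$ via (A2)--(A4) and $h\le c_{qu}q_X$, and the same packing-plus-distinctness bound on $\abs{J_x}$. The only difference is cosmetic: you count nodes in the larger ball $B(x,(c_2+c_1)h)$, whereas the paper notes that $x\in\intr(C_j)$ already places every node of an active stencil in $B(x,c_2h)$, which gives $N_{\mathrm{act}}=\binom{\lfloor(1+c_2c_{qu})^s\rfloor}{m}$ directly, so this step is no more delicate than the others.
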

\begin{proof}
Let $\mathcal G$ denote the set of one-sided active stencils and set
\[
\mathcal B:=J_x\setminus\mathcal G.
\]
By (A1), all stencils in $\mathcal G$ lie on the same side of
$\Gamma$ as $x$, while every stencil in $\mathcal B$ is not
one-sided. Without loss of generality, let $x\in\Omega^+$.

We first show that the number of active stencils is bounded
independently of $h$.
If $j\in J_x$, then $x\in\intr(C_j)$ and therefore, by the
definition of $C_j$,
\[
\norm{x-x_{j_i}}<R_j\le c_2h,
\qquad i=1,\dots,m.
\]
Hence all the nodes of every active stencil belong to
$B(x,c_2h)$. The open balls $B(x_i,q_X)$ centered at the data sites
in $X\cap B(x,c_2h)$ are pairwise disjoint and are contained in
$B(x,c_2h+q_X)$. Comparing volumes and using
$h\le c_{qu}q_X$ gives
\[
\abs{X\cap B(x,c_2h)}
\le
\Bigl(\frac{c_2h+q_X}{q_X}\Bigr)^s
\le
(1+c_2c_{qu})^s
=:\kappa.
\]
Since $\mathcal S_d$ consists of pairwise distinct $m$-subsets of
$X$, it follows that
\[
\abs{J_x}
\le
\binom{\lfloor\kappa\rfloor}{m}
=:N_{\mathrm{act}},
\]
which is independent of $h$. Distinctness is essential here:
allowing repeated copies of a straddling stencil would let their total
normalized weight remain $O(1)$, however strong the damping of each
copy. In practice, the minimal-covering construction keeps
$\abs{J_x}$ much smaller than this crude bound.

We next estimate the contribution of the one-sided stencils.
For $j\in\mathcal G$, one has
$\tau_j\subset\Omega^+$ and therefore
\[
P_j[\tilde f]=P_j[f_1].
\]
Moreover, since $j$ is active, \eqref{eq:barycentric} gives
$x\in\overline{B(a_j,c_2h)}$. Hence, by
\eqref{eq:localerr},
\[
\abs{\tilde f(x)-P_j[\tilde f](x)}
\le
Ch^{d+1}\abs{f_1}_{d+1},
\qquad j\in\mathcal G.
\]

We now bound the normalized weights of the stencils in $\mathcal B$.
By (A2), the reference stencil $j^*$ satisfies
\[
\norm{x-x_{j^*_i}}\le(1-\theta)R_{j^*},
\qquad i=1,\dots,m,
\]
and therefore $\beta_{\mu,j^*}[\tilde f](x)>0$. Thus, for
$j\in\mathcal B$,
\[
\begin{aligned}
\wt B_{\mu,j}[\tilde f](x)
&\le
\frac{\beta_{\mu,j}[\tilde f](x)}
     {\beta_{\mu,j^*}[\tilde f](x)}\\
&=
\frac{\wt W^u_{\mu,j}(x)}
     {\wt W^u_{\mu,j^*}(x)}
\left(
\frac{\epsilon+\Ind{j^*}}
     {\epsilon+\Ind{j}}
\right)^t,
\end{aligned}
\]
where $\wt W^u_{\mu,j}$ denotes the unnormalized numerator in
\eqref{eq:CMSweights}.

For $j\in\mathcal B$, each factor satisfies
\[
\left(
\frac{1}{\norm{x-x_{j_i}}}-\frac{1}{R_j}
\right)_+^\mu
\le
\norm{x-x_{j_i}}^{-\mu}.
\]
By (A4) and $h\le c_{qu}q_X$,
\[
\norm{x-x_{j_i}}^{-\mu}
\le
(\gamma_0q_X)^{-\mu}
\le
\left(\frac{c_{qu}}{\gamma_0}\right)^\mu h^{-\mu},
\]
and consequently
\[
\wt W^u_{\mu,j}(x)
\le
\left(\frac{c_{qu}}{\gamma_0}\right)^{\mu m}
h^{-\mu m}.
\]

On the other hand, setting
\[
\eta_i:=\norm{x-x_{j^*_i}},
\]
condition (A2) gives
\[
\eta_i\le(1-\theta)R_{j^*},
\]
and hence
\[
\frac1{\eta_i}-\frac1{R_{j^*}}
=
\frac{R_{j^*}-\eta_i}{\eta_iR_{j^*}}
\ge
\frac{\theta}{R_{j^*}}
\ge
\frac{\theta}{c_2h}.
\]
Therefore,
\[
\wt W^u_{\mu,j^*}(x)
\ge
\left(\frac{\theta}{c_2}\right)^{\mu m}
h^{-\mu m},
\]
and thus
\[
\frac{\wt W^u_{\mu,j}(x)}
     {\wt W^u_{\mu,j^*}(x)}
\le
\Bigl(\frac{c_2c_{qu}}
{\gamma_0\theta}\Bigr)^{\mu m}.
\]

Finally, using
$\epsilon\le h^{d+1}$,
$\Ind{j^*}\le C_Gh^{d+1}$ from (A2), and
$\Ind{j}\ge c_B$ from (A3), we obtain
\[
\wt B_{\mu,j}[\tilde f](x)
\le
\Bigl(\frac{c_2c_{qu}}
{\gamma_0\theta}\Bigr)^{\mu m}
\Bigl(\frac{1+C_G}{c_B}\Bigr)^t
h^{(d+1)t},
\qquad j\in\mathcal B.
\]

The estimate follows by combining the three bounds.
Using the partition of unity,
\[
\begin{aligned}
\abs{\tilde f(x)-\Kop[\tilde f](x)}
&\le
\sum_{j\in\mathcal G}
\wt B_{\mu,j}[\tilde f](x)
\abs{\tilde f(x)-P_j[\tilde f](x)}
\\
&\quad+
\sum_{j\in\mathcal B}
\wt B_{\mu,j}[\tilde f](x)
\abs{\tilde f(x)-P_j[\tilde f](x)}.
\end{aligned}
\]
By the estimate on the one-sided stencils and the normalization of the
nonlinear weights, the first sum is bounded by
\[
Ch^{d+1}\abs{f_1}_{d+1}.
\]
Furthermore, the uniform bound on the local Lebesgue constants gives
\[
\abs{P_j[\tilde f](x)}
\le
\Lambda\norm{\tilde f}_{\infty},
\]
and hence
\[
\abs{\tilde f(x)-P_j[\tilde f](x)}
\le
(1+\Lambda)\norm{\tilde f}_{\infty},
\qquad j\in\mathcal B.
\]
Using the bound on $\abs{J_x}$ and the bound on the weights of the
stencils in $\mathcal B$, the second sum is therefore bounded by
\[
N_{\mathrm{act}}
\Bigl(\frac{c_2c_{qu}}
{\gamma_0\theta}\Bigr)^{\mu m}
\Bigl(\frac{1+C_G}{c_B}\Bigr)^t
(1+\Lambda)\norm{\tilde f}_{\infty}
h^{(d+1)t}.
\]
This proves the stated estimate. The case
$x\in\Omega^-$ is analogous.
\end{proof}

\begin{remark}[scope of the theorem]\label{rem:scope}
Theorem~\ref{th:jump} is a pointwise conditional result: the
$O(h^{d+1})$ rate holds at every evaluation point $x$ satisfying
Assumption~\ref{ass:separation}. No uniform validity of
(A1)--(A4) with respect to $x$ is assumed here. Obtaining the
rate at every point at distance larger than a fixed multiple of $h$
from $\Gamma$ would require a covering-regularity property of the
selected stencil family, namely the existence, near every such point,
of a one-sided reference stencil as in (A2), together with (A1), (A3) and
(A4). Establishing such a property for the minimal-covering
construction used in the implementation is an open problem; the
experiments of Section~\ref{sec:numerics} are consistent with it for Halton
nodes and the circular discontinuity. Denoting by $H$ the computable
upper bound of $h_{X,\Omega}$ used in the experiments, see \eqref{eq:Hbound}
in Section~\ref{sec:numerics}, at the four levels of
Table~\ref{tab:smooth} the error of the nonlinear operator at distance
larger than $2H$ from $\Gamma$ is at most $1.2$ times its maximum error in
the smooth case, and the band where its error exceeds a quarter of the jump
has width between $0.6H$ and $0.8H$\ (Table~\ref{tab:bandref}). The
replacement of
ill-conditioned stencils described after Assumption~\ref{ass:stencils}
also changes the nodes removed during the covering construction, and
hence the subsequent stencils: the covering near $\Gamma$ is sensitive
to local choices, which is a further reason why its regularity cannot
be taken for granted. We note that, if
all the active stencils at a given point $x$ are one-sided and
belong to the same branch as $x$, the argument of
Theorem~\ref{th:smooth} applies directly and conditions (A3)--(A4)
are not needed.
\end{remark}

\medskip\noindent\textit{Choice of the exponent $t$.}
Under the condition $0<\epsilon\le h^{d+1}$, any fixed $t\ge1$
gives the order $O(h^{d+1})$ in Theorem~\ref{th:jump}. The estimate,
however, does not provide an optimal choice of $t$ at finite
resolution. Indeed, the constant $K_2$ contains the factor
\[
\Bigl(\frac{c_2c_{qu}}{\gamma_0\theta}\Bigr)^{\mu m},
\]
whereas the contribution of the detected stencils contains
\[
\Bigl(\frac{1+C_G}{c_B}\Bigr)^t h^{(d+1)t}.
\]
Since the constants involved are not known a priori, these estimates
give only qualitative indications on the choice of $t$. The
second factor decays faster as $d$ grows, while the first one grows
with $m=\binom{d+s}{s}$; at finite resolution this suggests that larger
values of $t$ may be needed at higher degrees to compensate for the
larger geometric prefactor. In the
numerical experiments we use $t=4,6,8,12$ for
$d=1,2,3,4$, respectively. The sensitivity study of
Section~\ref{sec:numerics} shows that, among the values tested,
$t=6$ gives the smallest error for $d=2$, while for $d=4$ the error
decreases up to $t=12$\ and remains
essentially unchanged for $t=14$ and $t=16$.
\medskip

\begin{remark}[comparison with the degree-zero case]
For $d=0$, the local interpolants are constants and the construction
reduces to a compactly supported WENO--Shepard-type method.
Theorems~\ref{th:smooth} and~\ref{th:jump} then give a first-order
approximation rate, as in \cite{LRRY}. The two methods use different
smoothness indicators: in \cite{LRRY} the indicator is obtained from
an auxiliary least-squares fit, whereas here it is defined through the
residuals in Definition~\ref{def:indicator}.

For $d\ge1$, the use of local polynomial interpolants gives the order
$O(h^{d+1})$ in smooth regions, and Theorem~\ref{th:jump} shows that
the same order is preserved at points satisfying
Assumption~\ref{ass:separation}. In the experiments of
Section~\ref{sec:numerics} with the circular discontinuity, at the four levels of
Table~\ref{tab:smooth} and for every degree, the error of the nonlinear
operator at distance larger than $2H$ from $\Gamma$ does not exceed $1.2$
times its maximum error in the smooth case: high order and sharp
resolution of the discontinuity are obtained simultaneously. The linear and nonlinear compact
multinode operators are both convex combinations of the active local interpolants; the difference is that the nonlinear weights reduce the
contribution of the stencils detected as non-smooth.
\end{remark}

\section{Numerical experiments}\label{sec:numerics}
All the experiments are performed on $\Omega=(0,1)^2$, with $N$ nodes
($N=n$ in the notation of Section~\ref{sec:shepard}), $\mu=4$ and
$\epsilon=10^{-14}$; points of $\R^2$ are written as $(x,y)$ in this
section. Note that $\epsilon\le h^{d+1}$ holds throughout
the reported resolutions: since $N$ balls of radius $h$ cover
$[0,1]^2$, one has $h\ge(N\pi)^{-1/2}$, which at the finest level
($N=16641$) gives $h>4.37\cdot10^{-3}$ and
$h^{5}\ge(N\pi)^{-5/2}>1.6\cdot10^{-12}>10^{-14}$, consistently with the theory. 
Two node distributions are used: Halton points and regular grids
with the same number $N_l=(2^l+1)^2$ of nodes. The grid nodes on the
boundary of $[0,1]^2$ do not belong to $\Omega$; the definitions and
results of Sections~\ref{sec:shepard}--\ref{sec:properties} remain
valid, with the same proofs, for $X\subset\overline\Omega$, which is the
setting used for the grids, provided that $f$ and, in
Section~\ref{sec:properties}, $\gamma$, the sets $\Omega^\pm$ and the
branches $f_1,f_2$ are understood as defined on $\overline\Omega$, so that
the data at the boundary nodes and their assignment to a branch are well
defined.
The stencils are
selected by the discrete Leja-type procedure described in
Section~\ref{sec:properties} (LU with row
pivoting on the local Vandermonde matrix in a translated and scaled
monomial basis \cite{BosLeja});  stencils violating
$\sigma_{\min}(V_j)\ge\sigma_0=0.05$ are replaced along the nested Leja
sequence as described in Section~\ref{sec:properties}. On Halton nodes
this happens only at the two finest levels (at most $54$ stencils, for
$d=4$ and $l=7$), on regular grids, where nearly collinear
configurations are more frequent, only for $d\ge3$ (at most $164$
stencils, for $d=4$ and $l=7$), and no stencil remained below
$\sigma_0$ in any experiment. In accordance with Section~\ref{sec:multinode}, the
parameter $h$ is the certified computable upper bound
\begin{equation}\label{eq:Hbound}
h_{X,\Omega}\;\le\;H\;:=\;\max_{z\in G_\delta}\min_{x_i\in X}
\norm{z-x_i}\;+\;\delta_G,
\end{equation}
where $G_\delta$ is an auxiliary uniform grid of covering radius
$\delta_G$ (in the implementation, $\delta_G=\delta/\sqrt2$ with
$\delta=1/(4\sqrt N)$ the grid spacing, and the grid covers the convex hull of
$X$ enlarged by $\delta_G$); since this set does not contain the
whole of $\Omega$, we also evaluated the same bound on a finer grid, of spacing $\delta/4$,
covering
$[0,1]^2$, obtaining $0.0781$, $0.0407$ and $0.0226$ for $N=289$,
$1089$ and $4225$, all smaller than the corresponding values of $H$ in
Table~\ref{tab:smooth} (for $N=16641$ the bound is $0.0119$,
against $H=0.0129$); on the regular grids $H$ exceeds the exact fill
distance $2^{-l}/\sqrt2$ at every level, so that $H>h_{X,\Omega}$ holds in every
experiment. The strict
inequality required by Lemma~\ref{lem:coverage} is therefore certified
at every level,  and no
uncovered evaluation point occurred in any experiment. The weights are evaluated in logarithmic
form with an incremental log-sum-exp normalization, stable up to the
data sites, where the interpolation value is assigned directly. The
smoothness indicators are computed on the sets $\mathcal E_j=\mathcal N_i\setminus\tau_j$
introduced after
Definition~\ref{def:indicator}, and the exponents are paired with the
degree as $d=1,2,3,4\leftrightarrow t=4,6,8,12$
(Section~\ref{subsec:jump}). In tables and captions the
data-dependent (nonlinear) CMS is labelled WENO, and the CMS of
Section~\ref{sec:multinode} linear. As reference smooth datum we use Franke's
function
\begin{equation}\label{eq:franke}
\begin{aligned}
f(x,y)={}&\tfrac34e^{-\frac14((9x-2)^2+(9y-2)^2)}
+\tfrac34e^{-\frac1{49}(9x+1)^2-\frac1{10}(9y+1)}\\
&+\tfrac12e^{-\frac14((9x-7)^2+(9y-3)^2)}
-\tfrac15e^{-(9x-4)^2-(9y-7)^2}.
\end{aligned}
\end{equation}
The errors of an approximant $\mathcal I$ ($\mathcal I=\wt M_\mu$ for the
linear operator, $\mathcal I=\Kop$ for the nonlinear one) are measured on
a uniform evaluation grid of $N_{\rm eval}=100^2$ points $z_i$ as the
maximum error and the root mean square error
\[
\Einf=\max_i\abs{f(z_i)-\mathcal I(z_i)},\qquad
\mathrm{RMSE}=\Bigl(\tfrac1{N_{\rm eval}}\textstyle\sum_i
(f(z_i)-\mathcal I(z_i))^2\Bigr)^{1/2},
\]
with rates $p^\infty_l=\log(\Einf^{(l-1)}/\Einf^{(l)})/
\log(H_{l-1}/H_l)$, where $\Einf^{(l)}$ and $H_l$ are the values of
$\Einf$ and $H$ at level $l$, and analogously $p^2_l$, computed with
unrounded values of $H$. (The quantity $\Einf$ is called maximum
absolute error, MAE, in \cite{LRRY}; we prefer the unambiguous
notation.)

\subsection{Order of accuracy in smooth regions}

We approximate \eqref{eq:franke} from $N_l=(2^l+1)^2$ Halton nodes and
regular grid nodes,
$l=4,5,6,7$, for local degrees $d=1,\dots,4$. Table~\ref{tab:smooth}
reports the results for the Halton nodes, Table~\ref{tab:smoothgrid}
those for the regular grids. The linear and the nonlinear operators exhibit the
same convergence orders at every degree, in accordance with
Theorem~\ref{th:smooth}. The root mean square errors are consistent with the order $d+1$ (rates
$2.02$, $3.12$, $3.99$ and $5.20$ at the finest step for the nonlinear
operator, after values above $d+1$ at the previous step; least squares
slopes over the four levels $2.05$, $3.09$,
$3.95$ and $4.95$), whereas the maximum-norm rates oscillate between
consecutive levels, e.g.\ $1.12$, $2.72$, $1.45$ for $d=1$. On scattered
nodes the stencils, and hence the local constants in \eqref{eq:localerr},
change from one level to the next; the oscillation of the maximum-norm rates
is consistent with this variation rather than with a loss of order, as the
least squares slopes of the root mean square errors and the results on the
regular grids indicate. On the regular grids the maximum-norm rates of the
linear
operator do not show the oscillations observed on the Halton set: they stay
between $2.02$ and $2.04$ for $d=1$ and increase monotonically for $d\ge2$,
and at the finest step
they are $2.04$, $3.10$, $3.93$ and $5.24$; those of
the nonlinear operator show small oscillations for $d=3$; at
equal number of nodes the maximum errors on the grids are smaller than on
the Halton
set for $d=1$ and $d=4$ at every level, but not for $d=2$ and $d=3$: the
grid error is the larger one at $d=2$, $l=4,6$ and $d=3$, $l=5,6$ for the
linear operator, and at $d=2$, $l=4,5,6$ and at every level for $d=3$ for
the nonlinear one.
At the finest level, on the Halton set and for the linear
operator, the maximum error decreases by nearly 
three orders of
magnitude (a factor $\approx$\,$710$) from $d=1$ to $d=4$ at equal number of
nodes.

\begin{table}[t]
\centering\small
\caption{Smooth Franke data, Halton nodes: errors and rates for the
linear CMS and the data-dependent (WENO) CMS, local degrees
$d=1,\dots,4$ with exponents $t=4,6,8,12$.}
\label{tab:smooth}
\begin{tabular}{cccc cc cccc}
\toprule
$d$ & $l$ & $N$ & $H$ &
$\Einf^{\rm lin}$ & $p^\infty$ &
$\Einf^{\rm WENO}$ & $p^\infty$ &
$\mathrm{RMSE}^{\rm WENO}$ & $p^2$\\
\midrule
1 & 4 & 289   & 0.0879 & 6.168e-02 & --   & 5.568e-02 & --   & 9.343e-03 & --\\
  & 5 & 1089  & 0.0421 & 2.709e-02 & 1.12 & 2.702e-02 & 0.98 & 2.937e-03 & 1.57\\
  & 6 & 4225  & 0.0246 & 6.255e-03 & 2.72 & 6.174e-03 & 2.74 & 7.152e-04 & 2.62\\
  & 7 & 16641 & 0.0129 & 2.469e-03 & 1.45 & 2.810e-03 & 1.23 & 1.964e-04 & 2.02\\
\midrule
2 & 4 & 289   & 0.0879 & 3.749e-02 & --   & 3.922e-02 & --   & 4.652e-03 & --\\
  & 5 & 1089  & 0.0421 & 1.153e-02 & 1.60 & 7.933e-03 & 2.17 & 7.282e-04 & 2.52\\
  & 6 & 4225  & 0.0246 & 1.028e-03 & 4.48 & 8.636e-04 & 4.11 & 9.947e-05 & 3.69\\
  & 7 & 16641 & 0.0129 & 3.563e-04 & 1.65 & 1.578e-04 & 2.65 & 1.344e-05 & 3.12\\
\midrule
3 & 4 & 289   & 0.0879 & 3.692e-02 & --   & 2.489e-02 & --   & 2.452e-03 & --\\
  & 5 & 1089  & 0.0421 & 5.135e-03 & 2.68 & 3.783e-03 & 2.56 & 2.925e-04 & 2.89\\
  & 6 & 4225  & 0.0246 & 2.801e-04 & 5.40 & 2.352e-04 & 5.15 & 1.873e-05 & 5.10\\
  & 7 & 16641 & 0.0129 & 3.280e-05 & 3.35 & 2.455e-05 & 3.53 & 1.453e-06 & 3.99\\
\midrule
4 & 4 & 289   & 0.0879 & 4.490e-02 & --   & 4.274e-02 & --   & 1.747e-03 & --\\
  & 5 & 1089  & 0.0421 & 2.055e-03 & 4.19 & 2.056e-03 & 4.13 & 9.419e-05 & 3.97\\
  & 6 & 4225  & 0.0246 & 8.994e-05 & 5.81 & 9.274e-05 & 5.75 & 4.092e-06 & 5.82\\
  & 7 & 16641 & 0.0129 & 3.477e-06 & 5.08 & 2.678e-06 & 5.53 & 1.458e-07 & 5.20\\
\bottomrule
\end{tabular}
\end{table}

\begin{table}[t]
\centering\small
\caption{Smooth Franke data, regular grid nodes: errors and rates
for the linear CMS and the data-dependent (WENO) CMS, local degrees
$d=1,\dots,4$ with exponents $t=4,6,8,12$.}
\label{tab:smoothgrid}
\begin{tabular}{cccc cc cccc}
\toprule
$d$ & $l$ & $N$ & $H$ &
$\Einf^{\rm lin}$ & $p^\infty$ &
$\Einf^{\rm WENO}$ & $p^\infty$ &
$\mathrm{RMSE}^{\rm WENO}$ & $p^2$\\
\midrule
1 & 4 & 289 & 0.0520 & 2.865e-02 & -- & 2.864e-02 & -- & 5.144e-03 & --\\
  & 5 & 1089 & 0.0268 & 7.439e-03 & 2.03 & 7.443e-03 & 2.03 & 1.275e-03 & 2.10\\
  & 6 & 4225 & 0.0136 & 1.897e-03 & 2.02 & 1.897e-03 & 2.02 & 3.188e-04 & 2.05\\
  & 7 & 16641 & 0.0069 & 4.672e-04 & 2.04 & 4.674e-04 & 2.04 & 7.951e-05 & 2.03\\
\midrule
2 & 4 & 289 & 0.0520 & 4.305e-02 & -- & 4.534e-02 & -- & 8.062e-03 & --\\
  & 5 & 1089 & 0.0268 & 7.890e-03 & 2.56 & 8.081e-03 & 2.60 & 1.127e-03 & 2.97\\
  & 6 & 4225 & 0.0136 & 1.113e-03 & 2.89 & 1.160e-03 & 2.86 & 1.436e-04 & 3.04\\
  & 7 & 16641 & 0.0069 & 1.333e-04 & 3.10 & 1.344e-04 & 3.14 & 1.850e-05 & 2.99\\
\midrule
3 & 4 & 289 & 0.0520 & 2.448e-02 & -- & 2.612e-02 & -- & 3.884e-03 & --\\
  & 5 & 1089 & 0.0268 & 5.446e-03 & 2.27 & 4.385e-03 & 2.69 & 3.209e-04 & 3.76\\
  & 6 & 4225 & 0.0136 & 4.829e-04 & 3.57 & 3.166e-04 & 3.88 & 2.386e-05 & 3.83\\
  & 7 & 16641 & 0.0069 & 3.260e-05 & 3.93 & 3.259e-05 & 3.32 & 1.535e-06 & 4.00\\
\midrule
4 & 4 & 289 & 0.0520 & 2.085e-02 & -- & 2.085e-02 & -- & 2.822e-03 & --\\
  & 5 & 1089 & 0.0268 & 1.711e-03 & 3.77 & 1.692e-03 & 3.79 & 1.298e-04 & 4.64\\
  & 6 & 4225 & 0.0136 & 6.545e-05 & 4.81 & 6.551e-05 & 4.80 & 4.977e-06 & 4.81\\
  & 7 & 16641 & 0.0069 & 1.799e-06 & 5.24 & 1.838e-06 & 5.21 & 1.603e-07 & 5.01\\
\bottomrule
\end{tabular}
\end{table}

\subsection{Approximation of piecewise smooth functions}

We consider the piecewise smooth datum
$\tilde f(x,y)=f(x,y)+\chi_{\Omega^+}(x,y)$, with $\chi_{\Omega^+}$ the
characteristic function of $\Omega^+$ and $f$ as in
\eqref{eq:franke} and $\Omega^+$ the disk of radius $0.25$ centered at
$(\tfrac12,\tfrac12)$, sampled at $N=65^2$ Halton nodes
($H=0.0246$).
Figure~\ref{fig:cenital} compares the original datum with the linear
and the nonlinear reconstructions for $d=1$; Figure~\ref{fig:errors}
shows the corresponding error maps in logarithmic scale, and
Figure~\ref{fig:section} a one-dimensional section together with the
empirical distribution of the smoothness indicators.

\begin{figure}[t]
\centering
\includegraphics[width=\linewidth]{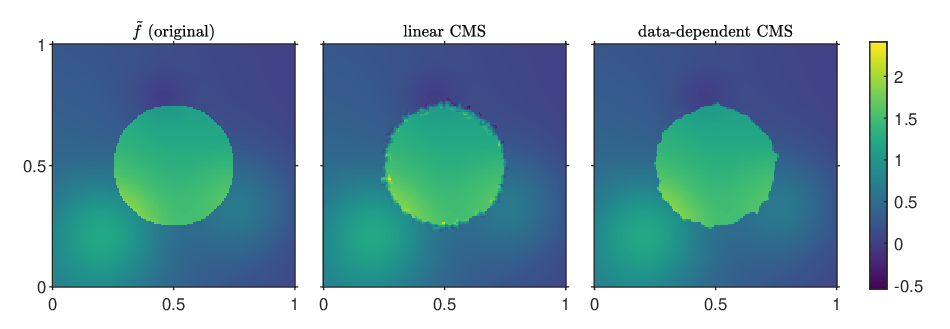}
\caption{Top views: original datum $\tilde f$ (left), linear CMS
(center), data-dependent CMS (right); circle geometry, Halton nodes,
$N=65^2$, $d=1$. }
\label{fig:cenital}
\end{figure}

\begin{figure}[t]
\centering
\includegraphics[width=\linewidth]{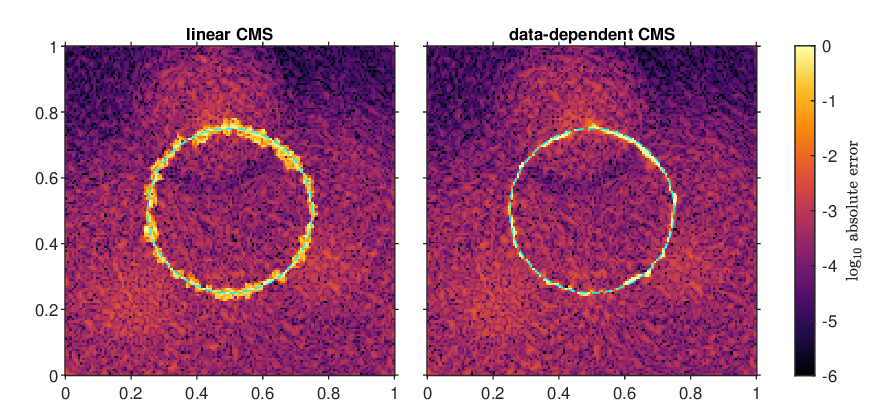}
\caption{$\log_{10}$ absolute error of the linear (left) and
data-dependent (right) CMS; the dashed curve is $\Gamma$. Same data as
Figure~\ref{fig:cenital}.}
\label{fig:errors}
\end{figure}

\begin{figure}[t]
\centering
\includegraphics[width=\linewidth]{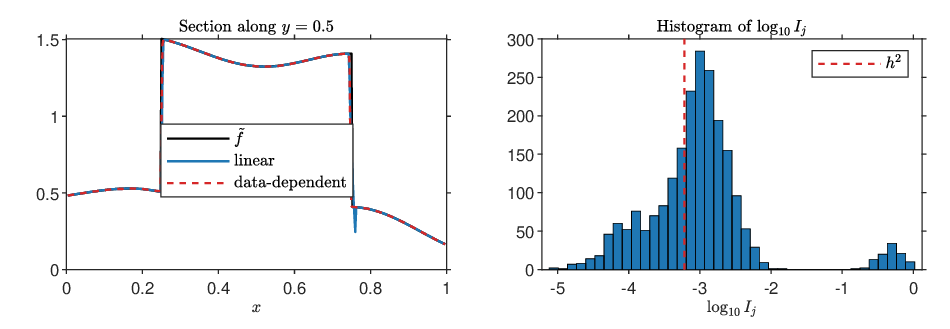}
\caption{Left: section along $y=0.5$. Right: histogram of
$\log_{10}\Ind{j}$; the dashed line marks $H^{2}$. 
Out of $2179$ stencils, the $65$ whose nodes straddle $\Gamma$
have indicators larger than $0.18$; together with $36$ one-sided
stencils near $\Gamma$, detected through their indicator sets as in
Lemma~\ref{lem:P1P2}(P2), they form the upper mode of the distribution,
$101$ stencils with $\Ind{j}>0.17$ (median $0.50$).
All the other indicators are at most $1.35\times10^{-2}$, so that no
indicator lies between $1.35\times10^{-2}$ and $0.17$; the two modes
of the distribution are separated by about three orders of magnitude.}
\label{fig:section}
\end{figure}

Table~\ref{tab:jump} quantifies the comparison for $d=1,\dots,4$. Three
effects are apparent. First, the error of the nonlinear operator at
distance $>H$ from $\Gamma$ decreases with the degree, within a factor
of four of the smooth case at distance $>H$ and at the level of the
smooth case at distance $>2H$, while the linear operator error remains between $0.25$ and $1.2$
there at the tested resolution. Second, the width of the contaminated band
(points with error larger than a quarter of the jump) for the linear
operator grows with the degree ($1.3H$, $1.8H$,
$2.0H$, $2.7H$), while it remains within $0.7$--$0.8H$ for the
nonlinear one. The band width is the largest distance from $\Gamma$ of an evaluation point
at which the error exceeds a quarter of the jump; it is measured on a
locally refined
set of evaluation points within $4H$ of $\Gamma$, with spacing $H/10$
in the normal direction and about $H/5$ or less along $\Gamma$. 
Third, the  deviations of the linear operator beyond the
range of $\tilde f$ on the evaluation grid remain substantial at all degrees, both above
the maximum (up to $+0.686$) and below the minimum (up to $0.954$),
 while for the nonlinear operator they are at most
$2.4\cdot10^{-2}$ above the maximum and $4.0\cdot10^{-3}$ below the
minimum.

To check that the width of the contaminated band is proportional
to $h$, we repeat the experiment on the four levels $l=4,\dots,7$ of
Table~\ref{tab:smooth}. Table~\ref{tab:bandref} reports the band widths,
measured on the refined set, in units of $H$: for the nonlinear operator
they remain between $0.6H$ and $0.8H$ at every level and degree, so that, over the tested levels,
the band shrinks proportionally to $H$; for the linear operator they
remain between $1.2H$ and $2.9H$. At the same four levels and for every
degree, the maximum error of the nonlinear operator at distance larger than
$2H$ from $\Gamma$ does not exceed its maximum error on the smooth datum
(Table~\ref{tab:smooth}), with the only exception of $d=3$, $l=5$, where it
is $1.2$ times as large; at distance larger than $H$ the ratio between the
two errors is at most $4$, except for $d=3$, $l=5$, where it is about $16$.

\begin{table}[t]
\centering\small
\caption{Jump on the circle, Halton nodes, $N=65^2$, $H=0.0246$:
maximum errors at distance $>H$ and $>2H$ from $\Gamma$, width of the
band where the error exceeds one quarter of the jump\ (measured on a
locally refined set near $\Gamma$),  signed
maximal deviation $\max\mathcal I-\max\tilde f$ (positive values are
overshoots; negative values indicate that the reconstruction stays
below the maximum of $\tilde f$), and undershoot
$(\min\tilde f-\min\mathcal I)_+$; maxima and minima are taken over the
evaluation grid.}
\label{tab:jump}
\begin{tabular}{clccccc}
\toprule
$d$ ($t$) & method & $\Einf$ ($>H$) & $\Einf$ ($>2H$) & band & deviation & undershoot\\
\midrule
1 (4)  & linear & 3.344e-01 & 6.255e-03 & $1.3H$ & $+5.79$e-01 & 2.55e-01\\
       & WENO   & 7.280e-03 & 6.174e-03 & $0.7H$ & $-8.82$e-04 & 4.00e-03\\
\midrule
2 (6)  & linear & 2.545e-01 & 3.172e-02 & $1.8H$ & $+2.71$e-01 & 1.84e-01\\
       & WENO   & 1.665e-03 & 8.619e-04 & $0.8H$ & $+3.95$e-04 & 0\\
\midrule
3 (8)  & linear & 1.150e+00 & 2.334e-01 & $2.0H$ & $+3.99$e-01 & 9.19e-01\\
       & WENO   & 5.096e-04 & 2.352e-04 & $0.8H$ & $+2.38$e-02 & 0\\
\midrule
4 (12) & linear & 7.318e-01 & 5.577e-01 & $2.7H$ & $+6.86$e-01 & 9.54e-01\\
       & WENO   & 3.656e-04 & 9.274e-05 & $0.8H$ & $-6.51$e-03 & 0\\
\bottomrule
\end{tabular}
\end{table}

\begin{table}[t]
\centering\small
\caption{Jump on the circle, Halton nodes, $N=(2^l+1)^2$: width
of the band where the error exceeds one quarter of the jump, in units
of $H$, for the linear and the data-dependent (WENO) CMS (measured as in
Table~\ref{tab:jump}).}
\label{tab:bandref}
\begin{tabular}{cl cccc}
\toprule
$d$ ($t$) & method & $l=4$ & $l=5$ & $l=6$ & $l=7$\\
 & & $H=0.0879$ & $H=0.0421$ & $H=0.0246$ & $H=0.0129$\\
\midrule
1 (4)  & linear & 1.2 & 1.6 & 1.3 & 1.3\\
       & WENO   & 0.7 & 0.7 & 0.7 & 0.7\\
\midrule
2 (6)  & linear & 1.6 & 2.1 & 1.8 & 1.8\\
       & WENO   & 0.6 & 0.7 & 0.8 & 0.6\\
\midrule
3 (8)  & linear & 1.5 & 2.9 & 2.0 & 2.3\\
       & WENO   & 0.7 & 0.8 & 0.8 & 0.8\\
\midrule
4 (12) & linear & 2.5 & 2.4 & 2.7 & 2.1\\
       & WENO   & 0.6 & 0.8 & 0.8 & 0.7\\
\bottomrule
\end{tabular}
\end{table}

To assess the dependence on the geometry of $\Gamma$, we repeat
the experiment with $\Omega^+=\{x+y\le1\}$, whose boundary $\Gamma$ is a
line crossing $\partial\Omega$, and with $\Omega^+=[\tfrac12,1]^2$, a square
with a corner at $(\tfrac12,\tfrac12)$ whose sides meet $\partial\Omega$,
using the same nodes and parameters; the results are reported in
Table~\ref{tab:geom}.\ The corner of the square lies outside the
$C^1$ setting of Section~\ref{sec:properties}, so that this geometry
serves as a stress test. At distance greater than $2H$ from $\Gamma$ the
nonlinear operator recovers, for both geometries and every degree, the
accuracy of the smooth case: the errors do not exceed the maximum
errors of Table~\ref{tab:smooth} at $l=6$. The linear operator, in contrast, has
errors comparable to the jump at distance greater than $H$ in all cases
and, for the line with $d\ge2$, also beyond $2H$. 
Between $H$ and $2H$ the nonlinear operator is less robust than
for the circle. Errors larger than a quarter of the jump occur for the
line with $d=2$ and for the square with $d=3$ and $d=4$, at no more than
two evaluation points in each case, all at distance between $H$ and
$1.5H$ from $\Gamma$; for the line with $d=3$ and $d=4$ the error at
distance $>H$ remains below a quarter of the jump but well above the
smooth level, while for the square with $d=2$ there is no
deterioration. The inspection of the
active stencils at these points shows that they are precisely
configurations excluded by Assumption~\ref{ass:separation}. Near the
points where the line meets $\partial\Omega$, only stencils straddling
$\Gamma$ are active, so that (A2) fails. Near the corner of the square
and near the points where its sides meet $\partial\Omega$, a one-sided
stencil of the opposite branch is active and, its indicator being
small, receives a large weight, so that (A1) fails, as anticipated in
the comment following Assumption~\ref{ass:separation}. In all these
cases the straddling stencils are detected by the indicator
($\Ind{j}\ge2.6\cdot10^{-2}$): the failures are due to the covering and
not to the detection mechanism, in accordance with
Remark~\ref{rem:scope}.

Finally, we compare the Halton set with a regular grid of the
same cardinality. Distances from $\Gamma$ and band widths have so far
been expressed in units of $H$. For a fixed number of nodes, however,
$H$ depends on the uniformity of the node distribution, whereas the
size of the stencils depends on the node density: for $N=65^2$, $H$ is
about $1.6$ times the mean node spacing on the Halton set and about
$0.87$ times the spacing on the regular grid. Thresholds in units of
$H$ are therefore not comparable across node distributions, and we
measure the errors of the nonlinear operator at the same physical
distances $\delta_H$, $2\delta_H$ and $3\delta_H$ from $\Gamma$, with
$\delta_H=0.0246$ the value of $H$ for the Halton set, for the three
geometries of this section (Table~\ref{tab:physical}). For $d\le3$ the
two node distributions behave comparably: at distance larger than
$2\delta_H$ the regular grid yields, for every geometry, errors
comparable to those of the smooth case (Table~\ref{tab:smoothgrid},
$l=6$). For $d=4$ the contaminated band on the grid is wider: errors
of order $10^{-2}$ persist up to $2\delta_H$ for all three geometries,
and the smooth level is reached only at distance $3\delta_H$; closer to
$\Gamma$, for the circle, isolated values exceed the jump, with errors
up to about $4$ at distance between the grid value of $H$ and
$\delta_H$. At distance larger than $\delta_H$ the grid is more accurate
than the Halton set for $d=1$ and in some cases with $d\ge2$ (the line
with $d=2$ and $d=3$, the square with $d=3$ and $d=4$), and less accurate
in the others (the circle with $d=2,3,4$, the line with $d=4$, the
square with $d=2$). Two features specific to regular grids should be
noted. First, the discrete Leja selection meets exact ties, so that the
stencils, and hence the errors closest to $\Gamma$, depend on the
tie-breaking rule of the LU factorization. Second, for the line and the
square some nodes lie exactly on $\Gamma$. Shifting $\Gamma$ by $0.37$
grid spacings, so that no node lies on it, reduces the error at
distance larger than $\delta_H$ for the square with $d=3$ from $0.17$ to
$5.8\cdot10^{-4}$ and for the square with $d=2$ from $3.0\cdot10^{-3}$
to $9.9\cdot10^{-4}$, while it leaves the other cases essentially
unchanged. Overall, on regular grids the method performs as on
scattered nodes for $d\le3$, while for $d=4$ the contaminated band is
wider.

\begin{table}[t]
\centering\small
\caption{Jump along a line ($\Omega^+=\{x+y\le1\}$) and along the
sides of a square ($\Omega^+=[\frac12,1]^2$), Halton nodes, $N=65^2$,
$H=0.0246$: maximum errors at distance $>H$ and $>2H$ from $\Gamma$ and
width of the band where the error exceeds one quarter of the jump
(measured as in Table~\ref{tab:jump}; for the linear operator on the square
with $d=2$, the $100^2$ evaluation grid contains a point above the threshold
at distance $1.4H$ from $\Gamma$, not captured by the refined set).}
\label{tab:geom}
\begin{tabular}{cl ccc ccc}
\toprule
 & & \multicolumn{3}{c}{line} & \multicolumn{3}{c}{square}\\
\cmidrule(lr){3-5}\cmidrule(lr){6-8}
$d$ ($t$) & method & $\Einf$ ($>H$) & $\Einf$ ($>2H$) & band & $\Einf$ ($>H$) & $\Einf$ ($>2H$) & band\\
\midrule
1 (4)  & linear & 4.345e-01 & 6.255e-03 & $1.5H$ & 4.493e-01 & 6.255e-03 & $1.4H$\\
       & WENO   & 6.174e-03 & 6.174e-03 & $0.6H$ & 6.174e-03 & 6.174e-03 & $0.5H$\\
\midrule
2 (6)  & linear & 1.026e+00 & 1.026e+00 & $2.0H$ & 2.854e-01 & 9.474e-04 & $1.0H$\\
       & WENO   & 4.000e-01 & 8.636e-04 & $1.4H$ & 9.541e-04 & 8.619e-04 & $0.8H$\\
\midrule
3 (8)  & linear & 1.437e+00 & 1.437e+00 & $2.1H$ & 7.482e-01 & 7.912e-02 & $1.7H$\\
       & WENO   & 5.555e-02 & 2.352e-04 & $0.8H$ & 1.000e+00 & 2.352e-04 & $1.1H$\\
\midrule
4 (12) & linear & 2.183e+00 & 4.095e-01 & $2.2H$ & 6.035e-01 & 2.345e-01 & $2.5H$\\
       & WENO   & 1.380e-01 & 9.274e-05 & $1.0H$ & 3.528e-01 & 9.274e-05 & $1.0H$\\
\bottomrule
\end{tabular}
\end{table}

\begin{table}[t]
\centering\small
\caption{Halton nodes versus regular grid, $N=65^2$: maximum error
of the data-dependent CMS at distance larger than $\delta_H$, $2\delta_H$
and $3\delta_H$ from $\Gamma$, with $\delta_H=0.0246$ for both node
distributions.}
\label{tab:physical}
\setlength{\tabcolsep}{4pt}
\begin{tabular}{cc ccc ccc}
\toprule
 & & \multicolumn{3}{c}{Halton} & \multicolumn{3}{c}{regular grid}\\
\cmidrule(lr){3-5}\cmidrule(lr){6-8}
$\Gamma$ & $d$ & $>\delta_H$ & $>2\delta_H$ & $>3\delta_H$ & $>\delta_H$ & $>2\delta_H$ & $>3\delta_H$\\
\midrule
circle & 1 & 7.3e-03 & 6.2e-03 & 6.2e-03 & 1.9e-03 & 1.8e-03 & 1.8e-03\\
 & 2 & 1.7e-03 & 8.6e-04 & 8.3e-04 & 1.9e-03 & 1.2e-03 & 1.2e-03\\
 & 3 & 5.1e-04 & 2.4e-04 & 2.4e-04 & 8.2e-04 & 2.9e-04 & 2.9e-04\\
 & 4 & 3.7e-04 & 9.3e-05 & 9.3e-05 & 9.0e-02 & 4.3e-02 & 5.2e-05\\
\midrule
line & 1 & 6.2e-03 & 6.2e-03 & 6.2e-03 & 1.9e-03 & 1.9e-03 & 1.9e-03\\
 & 2 & 4.0e-01 & 8.6e-04 & 8.6e-04 & 1.1e-01 & 1.2e-03 & 1.2e-03\\
 & 3 & 5.6e-02 & 2.4e-04 & 2.4e-04 & 3.9e-02 & 3.2e-04 & 3.2e-04\\
 & 4 & 1.4e-01 & 9.3e-05 & 9.3e-05 & 2.6e-01 & 2.2e-02 & 6.6e-05\\
\midrule
square & 1 & 6.2e-03 & 6.2e-03 & 6.2e-03 & 1.9e-03 & 1.9e-03 & 1.8e-03\\
 & 2 & 9.5e-04 & 8.6e-04 & 8.6e-04 & 3.0e-03 & 9.9e-04 & 9.9e-04\\
 & 3 & 1.0e+00 & 2.4e-04 & 2.4e-04 & 1.7e-01 & 3.1e-04 & 3.1e-04\\
 & 4 & 3.5e-01 & 9.3e-05 & 9.3e-05 & 1.2e-01 & 4.0e-02 & 5.2e-05\\
\bottomrule
\end{tabular}
\end{table}

\subsection{Sensitivity to the exponent $t$}

Table~\ref{tab:tsens} reports the maximum error at distance $>H$ from
$\Gamma$ as a function of $t$, for $d=2$ and $d=4$. For $d=2$ the
choice $t=6$ is the best among those tested, with a mild degradation
for larger $t$. For $d=4$ the error decreases monotonically up to
$t=12$\ and then remains at the same level
for $t=14$ and $t=16$, in qualitative agreement with the discussion
of the exponent in Section~\ref{subsec:jump}.

\begin{table}[t]
\centering\small
\caption{Sensitivity to $t$: maximum error at distance $>H$ from
$\Gamma$ (circle geometry, Halton nodes, $N=65^2$).}
\label{tab:tsens}
\setlength{\tabcolsep}{4pt}
\begin{tabular}{ccccccccc}
\toprule
 & $t=2$ & $t=4$ & $t=6$ & $t=8$ & $t=10$ & $t=12$ & $t=14$ & $t=16$\\
\midrule
$d=2$ & 5.828e-02 & 2.864e-02 & 1.665e-03 & 2.194e-03 & 2.778e-03 & 3.014e-03 & -- & --\\
$d=4$ & 5.140e-01 & 1.834e-01 & 7.677e-02 & 5.325e-02 & 7.434e-03 & 3.656e-04 & 3.669e-04 & 3.674e-04\\
\bottomrule
\end{tabular}
\end{table}

\subsection{Reproducibility}
The experiments were performed with a
MATLAB code building on the multinode Shepard implementation
\cite{DellAccioSoftware}; the results were independently verified with a separate Python
implementation. The codes, together with the scripts generating every
table and figure of this section, are available from the authors.

\section{Conclusions}\label{sec:conclusions}
We have introduced a data-dependent compact multinode Shepard method
which combines the approximation order of the multinode construction
with the essentially non-oscillatory behaviour of WENO-type weights.
The smoothness indicator is provided by the residuals of the local
interpolants on nodes outside the stencils, without additional function
evaluations, and the nonlinear renormalization preserves the partition
of unity, the interpolation property, the guaranteed regularity class
and the convex-combination structure of the linear operator. The
operator attains the order $h^{d+1}$ in smooth regions for any choice
of the parameters and, under an explicit indicator-separation
hypothesis, the same order near a jump discontinuity, with a
transparent analysis of the constants: the prefactor of the damped
contributions grows exponentially with the stencil cardinality, which
may contribute to the finite-resolution need for larger exponents $t$
at higher degrees. We have also delimited the scope of the detection mechanism by
an explicit configuration in which a stencil straddling the
discontinuity escapes the basic indicator. The numerical experiments confirm the theory
for local degrees up to four, on scattered and on regular grid nodes. For
the circular discontinuity on Halton nodes with $N=65^2$ the nonlinear
operator improves on the linear one, at distance larger than $H$ from
$\Gamma$, by a factor between $45$ and $2.3\cdot10^{3}$
(Table~\ref{tab:jump}), with the largest gains at the higher degrees. For
the line and square geometries the corresponding gain is irregular in the
degree, ranging from a factor of about $300$ (square, $d=2$) to none
(square, $d=3$, where the nonlinear operator is the less accurate of the
two), and on regular grids the contaminated band is wider for $d=4$
(Section~\ref{sec:numerics}); for all the geometries and degrees tested on
Halton nodes, with the exponents $t$ of Section~\ref{subsec:jump}, the nonlinear
operator recovers the accuracy of the smooth case at distance larger than
$2H$.

The few points at distance larger than $H$ from $\Gamma$ where the
nonlinear operator loses accuracy, on the line and square geometries, are
all points where either no one-sided stencil is active or a one-sided
stencil of the opposite branch is active (Section~\ref{sec:numerics}):
the loss is due to the covering, not to the indicator. This, together
with the exponential dependence of the constants on the stencil
cardinality, points to coverings adapted to the discontinuity, in which
straddling stencils are never active and conditions (A1) and (A3) of
Assumption~\ref{ass:separation} hold by construction. This is the subject
of a forthcoming paper by the authors.

\begin{acknowledgements}
The first three authors carried out this research as part of
RITA (Research ITalian network on Approximation) and the
UMI group ``Teoria dell'Approssimazione e Applicazioni''.
They have been supported by the GNCS-INdAM 2026 project
``Metodi polinomiali e kernel per l'approssimazione da dati
discreti e integrali con software OS''.
They are also members of the INdAM-GNCS Research Group.

The fourth and fifth authors have been supported through GVA project CIAICO/2024/089, the fifth author has also been funded by grant  PID2023-146836NB-I00 funded by MCIN/AEI/10.13039/501100011033 and by ERDF/EU.
\end{acknowledgements}

\bibliographystyle{elsarticle-num}
\bibliography{references.bib}

\end{document}